\title{Multiplication of Weak Equivalence Classes May Be Discontinuous}
\date{}
\author{Anton~Bernshteyn}
\address{Department of Mathematics, University of Illinois at Urbana--Champaign, IL, USA and Department of Mathematical Sciences, Carnegie Mellon University, Pittsburgh, PA, USA}
\email{bernsht2@illinois.edu; abernsht@math.cmu.edu}
\thanks{This research is supported in part by the Waldemar J., Barbara G., and Juliette Alexandra Trjitzinsky Fellowship.}
\newtheoremstyle{bfnote}%
{}{}%
{\slshape}{}%
{\bfseries}{\bfseries.}%
{ }%
{\thmname{#1}\thmnumber{ #2}\thmnote{ \ep{\normalfont{}#3}}}
\newtheoremstyle{defbfnote}%
{}{}%
{}{}%
{\bfseries}{.}%
{ }%
{\thmname{#1}\thmnumber{ #2}\thmnote{ (#3)}}
\newtheoremstyle{claim}%
{}{}%
{\slshape}{}%
{\itshape}{.}%
{ }%
{\thmname{#1}\thmnumber{ #2}\thmnote{ \ep{\normalfont{}#3}}}
\theoremstyle{bfnote}
\newtheorem{theo}{Theorem}[section]
\newtheorem{prop}[theo]{Proposition}
\newtheorem{lemma}[theo]{Lemma}
\newtheorem{corl}[theo]{Corollary}
\newtheorem{lemdef}[theo]{Lemma/Definition}
\newtheorem*{claim*}{Claim}
\newtheorem{smallclaim}{Claim}[theo]
\newcommand*{\myproofname}{Proof}
\newenvironment{claimproof}[1][\myproofname]{\begin{proof}[#1]}{\end{proof}}
\theoremstyle{definition}
\newtheorem{defn}[theo]{Definition}
\newtheorem{ques}[theo]{Question}
\newtheorem*{exmp*}{Example}
\theoremstyle{remark}
\newtheorem*{ques*}{Question}
\newtheorem*{remk*}{Remark}
\newcommand{\0}{\varnothing}
\newcommand{\set}[1]{\{#1\}}
\newcommand{\proj}{\mathrm{proj}}
\newcommand{\acts}{\curvearrowright}
\newcommand{\N}{\mathbb{N}}
\newcommand{\Z}{\mathbb{Z}}
\newcommand{\R}{\mathbb{R}}
\renewcommand{\epsilon}{\varepsilon}
\renewcommand{\phi}{\varphi}
\renewcommand{\theta}{\vartheta}
\renewcommand{\leq}{\leqslant}
\renewcommand{\geq}{\geqslant}
\renewcommand{\preceq}{\preccurlyeq}
\newcommand{\symdif}{\bigtriangleup}
\newcommand{\fins}[1]{[#1]^{<\infty}}
\renewcommand{\G}{\Gamma}
\newcommand{\defeq}{\coloneqq}
\newcommand{\emphd}[1]{{\fontseries{b}\selectfont\textsf{#1}}}
\newcommand{\Ball}{\mathrm{Ball}}
\newcommand{\wec}[1]{{[#1]}}
\newcommand{\wecSet}[3]{{\theta_{#2,#3}(#1)}}
\newcommand{\wecSetFun}[4]{{\theta_{#2,#3}(#1, #4)}}
\newcommand{\Meas}{{\mathrm{Meas}}}
\newcommand{\Step}{{\mathrm{Step}}}
\newcommand{\K}{{\mathcal{K}}}
\newcommand{\W}{{\mathcal{W}}}
\newcommand{\WFree}{\mathcal{FW}}
\newcommand{\dist}{{\mathrm{dist}}}
\newcommand{\bemph}[1]{{\normalfont#1}} % define how emphasised brackets should look
\newcommand{\ep}[1]{\bemph{(}#1\bemph{)}} % parentheses
\newcommand{\SL}{\mathrm{SL}}
\newcommand{\uhalf}{u}
\newcommand{\pmp}{{p.m.p.}\xspace}
\numberwithin{equation}{section}
\patchcmd{\subsection}{\normalfont}{\itshape\bfseries}{}{}
\def\@seccntformat#1{%
	\protect\textup{%
		\protect\@secnumfont
		\expandafter\protect\csname format#1\endcsname % <--- added
		\csname the#1\endcsname
		\protect\@secnumpunct
	}%
}
\newcommand{\neutralize}[1]{\expandafter\let\csname c@#1\endcsname\count@}
\begin{document}
	\pagestyle{plain}
	
	\maketitle
	
	\begin{abstract}
		For a countably infinite group $\G$, let $\W_\G$ denote the space of all weak equivalence classes of measure\-/preserving actions of $\G$ on atomless standard probability spaces, equipped with the compact metrizable topology introduced by Ab\'ert and Elek. There is a natural multiplication operation on $\W_\G$ (induced by taking products of actions) that makes $\W_\G$ an Abelian semigroup. Burton, Kechris, and Tamuz showed that if $\G$ is amenable, then $\W_\G$ is a topological semigroup, i.e., the product map $\W_\G \times \W_\G \to \W_\G \colon (\mathfrak{a}, \mathfrak{b}) \mapsto \mathfrak{a} \times \mathfrak{b}$ is continuous. In contrast to that, we prove that if $\G$ is a Zariski dense subgroup of $\SL_d(\Z)$ for some $d \geq 2$ (for instance, if $\G$ is a non-Abelian free group), then multiplication on $\W_\G$ is discontinuous, even when restricted to the subspace $\WFree_\G$ of all free weak equivalence classes.
	\end{abstract}
	
	\section{Introduction}
	
	\noindent Throughout, $\G$ denotes a countably infinite group with identity element $\mathbf{1}$. In this paper, we study \emphd{probability measure\-/preserving \ep{\pmp} actions} of $\G$, i.e., actions of the form $\alpha \colon \G \acts (X, \mu)$, where $(X, \mu)$ is a standard probability space and $\mu$ is preserved by $\alpha$. We say that a \pmp action $\alpha \colon \G \acts (X, \mu)$ is \emphd{free} if the stabilizer of $\mu$-almost every point $x \in X$ is trivial, i.e., if
	\[
		\mu(\set{x \in X \,:\, \gamma \cdot x \neq x \text{ for all } \mathbf{1} \neq \gamma \in \G}) = 1.
	\]

	 The concepts of \emph{weak containment} and \emph{weak equivalence} of \pmp actions of $\G$ were introduced by Kechris in~\cite[Section 10(C)]{K_book}. They were inspired by the analogous notions for unitary representations and are closely related to the so-called \emph{local\-/global convergence} in the theory of graph limits~\cite{LocalGlobal}. Roughly speaking, a \pmp action $\alpha \colon \G \acts (X, \mu)$ is weakly contained in another \pmp action $\beta \colon \G \acts (Y, \nu)$, in symbols $\alpha \preceq \beta$, if the interaction between any finite measurable partition of $X$ and a finite collection of elements of $\G$ can be simulated, with arbitrarily small error, by a measurable partition of $Y$ (see \S\ref{subsec:weak_cont_defn} for the precise definition). If both $\alpha \preceq \beta$ and $\beta \preceq \alpha$, then $\alpha$ and $\beta$ are said to be weakly equivalent, in symbols $\alpha \simeq \beta$. %The weak equivalence class of $\alpha$ is denoted by $\wec{\alpha}$.
	
	The relation of weak equivalence is much coarser than the conjugacy relation, which makes it relatively well-behaved. On the other hand, several interesting parameters associated with \pmp actions---such as their cost, type, etc.---turn out to be invariants of weak equivalence. Due to these favorable properties, the relations of weak containment and weak equivalence have attracted a considerable amount of attention in recent years. For a survey of the topic, see \cite{BK}.
	
	We denote the weak equivalence class of a \pmp action $\alpha$ by $\wec{\alpha}$. Define
	\[
		\W_\G \defeq \set{\wec{\alpha} \,:\, \alpha \colon \G \acts (X, \mu), \text{ where } (X, \mu) \text{ is atomless}}.
	\]
	A weak equivalence class $\mathfrak{a} \in \W_\G$ is \emphd{free} if $\mathfrak{a} = \wec{\alpha}$ for some free \pmp action $\alpha$. Let
	\[
		\WFree_\G \defeq \set{\mathfrak{a} \in \W_\G \,:\, \mathfrak{a} \text{ is free}}.
	\]
	Freeness is an invariant of weak equivalence \cite[Theorem~3.4]{BK}; in other words, if $\mathfrak{a} \in \WFree_\G$, then all \pmp actions $\alpha$ with $\wec{\alpha} = \mathfrak{a}$ are free.
	
	Ab\'ert and Elek introduced a natural topology on $\W_\G$ and proved that it is compact and metrizable \cite[Theorem~1]{AbertElek} (see also \cite[Theorem~10.1]{BK}). Under this topology, $\WFree_\G$ becomes a closed subset of $\W_\G$ \cite[Corollary~10.7]{BK}. We review the definition of this topology in \S\ref{subsec:top_defn}.
	
	The \emphd{product} of two \pmp actions $\alpha \colon \G \acts (X, \mu)$ and $\beta \colon \G \acts (Y, \nu)$ is the action
	\[
		\alpha \times \beta \colon \G \acts (X \times Y, \mu \times \nu), \qquad \text{given by} \qquad \gamma \cdot (x, y) \defeq (\gamma \cdot x, \gamma \cdot y).
	\]
	It can be easily seen that the weak equivalence class of $\alpha \times \beta$ is determined by the weak equivalence classes of $\alpha$ and $\beta$ (for completeness, we include a proof of this fact---see Corollary~\ref{corl:mult}), hence there is a well-defined multiplication operation on $\W_\G$, namely
	\[
	\wec{\alpha} \times \wec{\beta} \defeq \wec{\alpha \times \beta}.
	\]
	Equipped with this operation, $\W_\G$ is an Abelian semigroup and $\WFree_\G$ is a subsemigroup (in fact, an ideal) in $\W_\G$. We are interested in the following natural question:
	
	\begin{ques}[{\cite[Problem 10.36]{BK}}]\label{ques:main}
		Is $\W_\G$ a topological semigroup? In other words, is the map $\W_\G \times \W_\G \to \W_\G \colon (\mathfrak{a}, \mathfrak{b}) \mapsto \mathfrak{a} \times \mathfrak{b}$ continuous?
	\end{ques}
	
	Burton, Kechris, and Tamuz answered Question~\ref{ques:main} positively when the group $\G$ is amenable \cite[Theorem~10.37]{BK}. A crucial role in their argument is played by the identification of the space $\W_\G$ for amenable $\G$ with the space of the so-called \emph{invariant random subgroups} of $\G$ \cite[Theorem~10.6]{BK}. Note that the continuity of multiplication on the subspace $\WFree_\G$ for amenable $\G$ is a triviality, since if $\G$ is amenable, then $\WFree_\G$ contains only a single point \cite[15]{BK}. On the other hand, if $\G$ is nonamenable, then $\WFree_\G$ has cardinality continuum \cite[Remark 4.3]{T-D}.
	
	The goal of this paper is to give a \emph{negative} answer to Question~\ref{ques:main} for a certain class of nonamenable groups $\G$, including the non-Abelian free groups:
	
	\begin{theo}\label{theo:main}
		Let $d \geq 2$ and let $\G \leq \SL_d(\Z)$ be a subgroup that is Zariski dense in $\SL_d(\R)$.
		
		\begin{enumerate}[label={\ep{\normalfont{}\arabic*}}]
			\item\label{item:squaring} The map
			$
			\WFree_\G \to \WFree_\G \colon \mathfrak{a} \mapsto \mathfrak{a} \times \mathfrak{a}
			$
			is discontinuous.
			
			\item\label{item:fiber} There is $\mathfrak{b} \in \WFree_\G$ such that the map
			$
			\WFree_\G \to \WFree_\G \colon \mathfrak{a} \mapsto \mathfrak{a} \times \mathfrak{b}
			$
			is discontinuous.
		\end{enumerate}
	\end{theo}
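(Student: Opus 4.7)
The starting point is the natural \pmp action $\alpha : \G \acts ((\R/\Z)^d, \lambda)$ by linear transformations with Lebesgue measure $\lambda$. Each non-identity $\gamma \in \G$ fixes only finitely many points of $(\R/\Z)^d$, so $\alpha$ is free; moreover, under the Zariski density hypothesis, a theorem of Bourgain--Furman--Lindenstrauss--Mozes (building on Benoist--Quint) classifies all $\G$-invariant Borel probability measures on $(\R/\Z)^d$: each is a convex combination of $\lambda$ with atomic measures supported on finite $\G$-orbits of rational points. This scarcity of invariant measures will be the rigidity engine of the argument.

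For part~\ref{item:squaring}, my plan is to exhibit a sequence of free \pmp actions $(\alpha_n)$ with $\wec{\alpha_n} \to \wec{\alpha}$ in the Ab\'ert--Elek topology but $\wec{\alpha_n \times \alpha_n} \not\to \wec{\alpha \times \alpha}$. A natural candidate combines $\alpha$ with a free Bernoulli shift $\beta$ (the minimum element of $\WFree_\G$ by Ab\'ert--Weiss): I would let $\alpha_n$ be a suitable symbolic quotient of $\alpha \times \beta$ that is ``single-coordinate close'' to $\alpha$ but loses the fine product structure. Verifying $\wec{\alpha_n} \to \wec{\alpha}$ reduces to checking that every finite-partition observable of $\alpha$ can be simulated inside $\alpha_n$ with arbitrarily small error, which the symbolic-coding presentation makes tractable.

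The harder half is to show that $\wec{\alpha_n \times \alpha_n}$ stays bounded away from $\wec{\alpha \times \alpha}$. For this I would exploit the diagonal self-joinings of $\alpha \times \alpha$: the translated diagonals $\Delta_t = \set{(x, x + t) : x \in (\R/\Z)^d} \subset (\R/\Z)^{2d}$ are $\G$-invariant and carry natural Lebesgue measures $\lambda_{\Delta_t}$, and applying BFLM to the projection $(x, y) \mapsto x - y$ shows that every $\G$-invariant probability measure on $(\R/\Z)^{2d}$ with Lebesgue marginals is a convex combination of $\lambda \otimes \lambda$ with such translated-diagonal measures. These diagonal measures are visible in two-coordinate symbolic codings of $\alpha \times \alpha$, whereas any approximation by codings of $\alpha_n \times \alpha_n$ would force the $\alpha_n$ themselves to contain an essentially exact copy of $\alpha$---which the construction in the previous paragraph is designed to prevent.

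The main obstacle will be converting the qualitative BFLM classification into a quantitative non-approximation statement at the level of Ab\'ert--Elek-close weak equivalence classes; this is where most of the technical work should live. For part~\ref{item:fiber}, the same framework should handle the problem with $\mathfrak{b} = \wec{\alpha}$, since only one of the two factors in the product actually needs to be the rigid action $\alpha$ for the obstruction to operate.
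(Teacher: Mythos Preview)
What you have written is a research outline, not a proof, and the outline has a structural gap at exactly the point where the real work lies.

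Your plan hinges on producing free actions $\alpha_n$ with $\wec{\alpha_n}\to\wec{\alpha}$ but $\wec{\alpha_n\times\alpha_n}\not\to\wec{\alpha\times\alpha}$. You never specify $\alpha_n$: ``a suitable symbolic quotient of $\alpha\times\beta$ that is single-coordinate close to $\alpha$ but loses the fine product structure'' is a wish, not a construction. More seriously, your proposed obstruction---that approximating the diagonal-based invariant $2$-coloring of $(\R/\Z)^{2d}$ inside $\alpha_n\times\alpha_n$ would force $\alpha_n$ to ``contain an essentially exact copy of $\alpha$''---is asserted without any mechanism. In fact the hypothesis $\wec{\alpha_n}\to\wec{\alpha}$ already says that $\alpha_n$ approximates every finite observable of $\alpha$, so the distinction you want (approximates $\alpha$ well enough for convergence, but not well enough to see the diagonal) has to be made quantitative in a very specific way, and nothing in BFLM does this for you. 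Measure rigidity classifies invariant \emph{measures}; the Ab\'ert--Elek topology is about closed sets of partition statistics under a \emph{fixed} measure, and you give no bridge between the two.

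The paper takes a different and much more concrete route. Instead of the torus, it uses the left-multiplication actions $\alpha_p:\G\acts\SL_d(\Z_p)$ (or, at the finite level, $\G\acts\SL_d(\Z/n\Z)$). The product $\alpha_n\times\alpha_m$ has an explicit invariant map $(x,y)\mapsto \pi_n(y)^{-1}x$, giving an invariant $2$-coloring that realizes the ``half'' vector $u$---this is the analogue of your diagonal idea. The decisive ingredient you are missing is then supplied by \emph{quasirandomness} of $\SL_d(\Z/n\Z)$: a direct convolution estimate (Theorem~\ref{theo:weak_mixing} and Proposition~\ref{prop:Frobenius}) shows that no $N$-step function with $N\lesssim\sqrt{p}$ can sit near any such invariant coloring. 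Combined with Bourgain--Varj\'u expansion, this yields $N_{S,2}(\alpha_{\mathcal N(p)},\alpha_{\mathcal N(p)},\delta)\to\infty$, and the paper's continuity criterion (Theorem~\ref{theo:criterion}) converts this directly into discontinuity---without ever identifying a limit point. If you want to salvage your torus approach, you would need a comparably sharp ``step functions cannot approximate the diagonal coloring'' lemma, and it is not clear what would play the role of quasirandomness there.
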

	
	As observed in \cite[\S10.2]{BK}, part \ref{item:squaring} of Theorem~\ref{theo:main} yields the following corollary:
	
	\begin{corl}
		There exists a countable group $\Delta$ with a normal subgroup $\G \lhd \Delta$ of index $2$ such that the co\-/induction map $\W_\G \to \W_\Delta$ is discontinuous.
	\end{corl}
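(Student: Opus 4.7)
The plan is to take $\Delta \defeq \G \times \Z/2\Z$, so that $\G$ is a normal subgroup of $\Delta$ of index $2$, and to argue by contradiction. The key observation is that for this choice of $\Delta$ the co-induction construction takes a particularly transparent form: for a \pmp action $\alpha \colon \G \acts (X, \mu)$, the co-induced action $\mathrm{CInd}_\G^\Delta(\alpha)$ lives on $(X \times X, \mu \times \mu)$, and upon choosing the coset representatives $\mathbf{1}$ and the generator $\uhalf$ of the $\Z/2\Z$ factor---both of which commute with $\G$---the associated cocycle is trivial. Consequently, $\G$ acts diagonally while $\uhalf$ swaps the two coordinates, yielding the identification
\[
\mathrm{CInd}_\G^\Delta(\alpha)\big|_\G \;\cong\; \alpha \times \alpha.
\]
At the level of weak equivalence classes, the composition $\W_\G \to \W_\Delta \to \W_\G$ of co-induction with restriction back to $\G$ is therefore exactly the squaring map $\mathfrak{a} \mapsto \mathfrak{a} \times \mathfrak{a}$.

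Assume for contradiction that the co-induction map $\W_\G \to \W_\Delta$ is continuous. The restriction map $\W_\Delta \to \W_\G$ is well-defined on weak equivalence classes and continuous directly from the Ab\'ert--Elek definition of the topology: its basic neighborhoods are controlled by finite partitions together with finitely many group elements, and discarding the elements of $\Delta \setminus \G$ cannot increase any of the relevant approximation errors. Composing the two continuous maps would then produce a continuous squaring map on $\W_\G$, which in particular restricts to a continuous squaring map on the subspace $\WFree_\G$. This contradicts part \ref{item:squaring} of Theorem~\ref{theo:main}.

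The only real calculation is the isomorphism $\mathrm{CInd}_\G^\Delta(\alpha)|_\G \cong \alpha \times \alpha$, which is a routine unwinding of the definition of co-induction and is essentially the observation indicated in \cite[\S10.2]{BK}. I foresee no substantial obstacle: once this identification is recorded, the corollary becomes a one-line deduction from part \ref{item:squaring} of Theorem~\ref{theo:main} via the continuity of restriction.
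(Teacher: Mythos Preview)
Your proposal is correct and follows essentially the same route as the paper: take $\G$ to be a Zariski dense subgroup of $\SL_d(\Z)$ (which you should state explicitly rather than leave implicit), set $\Delta = \G \times \Z/2\Z$, identify $\mathrm{CInd}_\G^\Delta(\alpha)\big|_\G$ with $\alpha \times \alpha$, and combine the continuity of restriction (the paper cites \cite[Proposition~10.10]{BK} for this) with Theorem~\ref{theo:main}\ref{item:squaring} to force co-induction to be discontinuous. The only cosmetic issue is your use of the symbol $\uhalf$ for the generator of $\Z/2\Z$, which clashes with the paper's later use of $\uhalf$ for a particular vector in $\mathbb{I}_{S,2}$.
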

	\begin{proof}[\textsc{Proof}]
		Let $d \geq 2$ and let $\G \leq \SL_d(\Z)$ be any Zariski dense subgroup. Set $\Delta \defeq \G \times (\Z/2\Z)$ and identify $\G$ with a normal subgroup of $\Delta$ of index $2$ in the obvious way. Then, for any \pmp action $\alpha$ of $\G$, the restriction of the co-induced action $\mathrm{CInd}_\G^\Delta(\alpha)$ back to $\G$ is isomorphic to $\alpha \times \alpha$. Since the restriction map $\W_\Delta \to \W_\G$ is continuous \cite[Proposition 10.10]{BK}, Theorem~\ref{theo:main}\ref{item:squaring} forces the co-induction map $\W_\G \to \W_\Delta$ to be discontinuous. For details, see \cite[\S10.2]{BK}.
	\end{proof}
	
	In view of Theorem~\ref{theo:main} and the result of Burton, Kechris, and Tamuz, it is tempting to conjecture that $\W_\G$ is a topological semigroup \emph{if and only if} $\G$ is amenable. However, at this point we do not even know whether multiplication of weak equivalence classes is discontinuous for every countable group that contains a non-Abelian free subgroup.
	
	Our proof of Theorem~\ref{theo:main} provides explicit examples of sequences of \pmp actions that witness the discontinuity of multiplication on $\W_\G$. We describe one such example here. Let $d \geq 2$ and let $\G \leq \SL_d(\Z)$ be a Zariski dense subgroup. For a prime $p$, let $\Z_p$ denote the ring of $p$-adic integers. Then $\SL_d(\Z_p)$ is an infinite profinite group. Since $\SL_d(\Z)$ naturally embeds in $\SL_d(\Z_p)$, we may identify $\G$ with a subgroup of $\SL_d(\Z_p)$ and consider the left multiplication action $\alpha_p \colon \G \acts \SL_d(\Z_p)$, which we view as a \pmp action by putting the Haar probability measure on $\SL_d(\Z_p)$. Let $\mathfrak{a}_p$ denote the weak equivalence class of $\alpha_p$. Using the compactness of $\W_\G$, we can pick an increasing sequence of primes $p_0$, $p_1$, \ldots{} such that the sequence $(\mathfrak{a}_{p_i})_{i\in\N}$ converges in $\W_\G$ to some weak equivalence class $\mathfrak{a}$. Then it follows from our results that the sequence $(\mathfrak{a}_{p_i} \times \mathfrak{a}_{p_i})_{i \in \N}$ does \emph{not} converge to $\mathfrak{a} \times \mathfrak{a}$, thus demonstrating that multiplication on $\W_\G$ is discontinuous.
	 	
	The main tools that we use to prove Theorem~\ref{theo:main} come from the study of expansion properties in finite groups of Lie type, specifically the groups $\SL_d(\Z/n\Z)$ for $n \in \N^+$. Our primary reference for this subject is the book \cite{Tao_book}.
	
	This paper is organized as follows. Section~\ref{sec:prelim} contains some basic definitions (such as the definition of the weak equivalence relation and the topology on the space $\W_\G$) and a few preliminary results. In Section~\ref{sec:step}, we introduce the terminology pertaining to step functions and use it in Section~\ref{sec:criterion} to prove Theorem~\ref{theo:criterion}, an explicit criterion for continuity of multiplication, which is of some independent interest. Finally, the proof of Theorem~\ref{theo:main} is presented in Section~\ref{sec:proof}. %Finally, in Section~\ref{sec:additions} contains some further remarks and extensions of Theorem~\ref{theo:main}.
	
	\subsection*{Acknowledgements}
	
	I am grateful to Alexander Kechris for bringing Question~\ref{ques:main} to my attention, to Anush Tserunyan for insightful discussions, and to the anonymous referee for carefully reading the manuscript and providing helpful positive comments.
	
	\section{Preliminaries}\label{sec:prelim}
	
	\subsection{Basic notation, conventions, and terminology}
	
	We use $\N$ to denote the set of all nonnegative integers and let $\N^+ \defeq \N \setminus \set{0}$. Each $k \in \N$ is identified with the set $\set{i \in \N \,:\, i < k}$.
	
	For a set $S$, we use $\fins{S}$ to denote the set of all finite subsets of $S$.
	
	For a standard probability space $(X, \mu)$ and $k \in \N^+$, we use $\Meas_k(X,\mu)$ to denote the space of all measurable maps $f \colon X \to k$, equipped with the pseudometric
	\[
	\dist_\mu(f, g) \defeq \mu(\set{x \in X \,:\, f(x) \neq g(x)}).
	\]
	
	\subsection{The relations of weak containment and weak equivalence}\label{subsec:weak_cont_defn}
	
	A number of equivalent definitions of weak containment exist, and several of them can be found in~\cite[\S\S2.1, 2.2]{BK}. We use the definition given in \cite[\S2.2(1)]{BK}, as it is particularly well-suited for introducing the topology on the space of weak equivalence classes.
	
	Let $\alpha \colon \G \acts (X,\mu)$ be a \pmp action of $\G$. For $S \in \fins{\G}$, $k \in \N^+$, and $f \in \Meas_k(X,\mu)$, define
	\[
	\wecSetFun \alpha S k f \colon S \times k \times k \to [0;1]
	\]
	by setting, for all $\gamma \in S$ and $i$, $j < k$,
	\[
	\wecSetFun \alpha S k f (\gamma, i, j) \defeq \mu(\set{x \in X \,:\, f(x) = i,\ f(\gamma \cdot x) = j}).
	\]
	Thus, $\wecSetFun \alpha S k f$ is a vector in the unit cube $\mathbb{I}_{S,k} \defeq [0;1]^{S \times k \times k}$. For $F \subseteq \Meas_k(X, \mu)$, let
	\[
	\wecSetFun \alpha S k F \defeq \set{\wecSetFun \alpha S k f \,:\, f \in F},
	\]
	and define $\wecSet \alpha S k $ to be the closure of the set $\wecSetFun \alpha S k {\Meas_k(X, \mu)}$ in $\mathbb{I}_{S,k}$.
	
	\begin{defn}\label{defn:weak}
		Let $\alpha$ and $\beta$ be \pmp actions of $\G$. We say that $\alpha$ is \emphd{weakly contained} in $\beta$, in symbols $\alpha \preceq \beta$, if for all $S \in \fins{\G}$ and $k \in \N^+$, we have \[\wecSet \alpha S k \subseteq \wecSet \beta S k.\] If simultaneously $\alpha \preceq \beta$ and $\beta \preceq \alpha$, i.e., if for all $S \in \fins{\G}$ and $k \in \N^+$, we have \[\wecSet \alpha S k = \wecSet \beta S k,\] then $\alpha$ and $\beta$ are said to be \emphd{weakly equivalent}, in symbols $\alpha \simeq \beta$.
	\end{defn}
	
	In view of Definition~\ref{defn:weak}, we refer to the sequence
	\[
	\wec{\alpha} \defeq (\wecSet \alpha S k )_{S,k},
	\]
	where $S$ and $k$ run over $\fins{\G}$ and $\N^+$ respectively, as the \emphd{weak equivalence class} of $\alpha$. Let
	\[
	\W_\G \defeq \set{\wec{\alpha} \,:\, \alpha \colon \G \acts (X, \mu), \text{ where } (X, \mu) \text{ is atomless}}.
	\]
	A weak equivalence class $\mathfrak{a} \in \W_\G$ is \emphd{free} if $\mathfrak{a} = \wec{\alpha}$ for some free \pmp action $\alpha$. Define
	\[
	\WFree_\G \defeq \set{\mathfrak{a} \in \W_\G \,:\, \mathfrak{a} \text{ is free}}.
	\]
	
	\begin{theo}[{\cite[Theorem~3.4]{BK}}]
		Let $\alpha$ and $\beta$ be \pmp actions of~$\G$. If $\alpha$ is free and $\alpha \preceq \beta$, then $\beta$ is also free. In particular, if $\mathfrak{a} \in \WFree_\G$, then all \pmp actions $\alpha$ with $\wec{\alpha} = \mathfrak{a}$ are free.
	\end{theo}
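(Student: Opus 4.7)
The plan is to argue by contradiction, one $\gamma \in \G \setminus \set{\mathbf{1}}$ at a time: I will assume $\delta \defeq \nu(\set{y \in Y : \gamma \cdot y = y}) > 0$ and deduce from $\alpha \preceq \beta$ (applied at $S = \set{\gamma}$) that $\alpha$ too has a $\gamma$-fixed set of positive measure, contradicting freeness.

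For the transfer step, observe that for any $k \in \N^+$ and any $f \in \Meas_k(Y,\nu)$, every $y$ fixed by $\gamma$ automatically satisfies $f(y) = f(\gamma \cdot y)$, so
\[
\sum_{i < k} \wecSetFun{\beta}{\set{\gamma}}{k}{f}(\gamma, i, i) \;=\; \nu(\set{y \in Y : f(y) = f(\gamma \cdot y)}) \;\geq\; \delta.
\]
Since coordinate projections on $\mathbb{I}_{\set{\gamma},k}$ are continuous, the inequality $\sum_i v(\gamma, i, i) \geq \delta$ persists on the closure $\wecSet{\beta}{\set{\gamma}}{k}$, and by $\alpha \preceq \beta$ it passes to $\wecSet{\alpha}{\set{\gamma}}{k}$ as well. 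Hence for every $k \in \N^+$ and every $f \in \Meas_k(X,\mu)$,
\[
\mu(\set{x \in X : f(x) = f(\gamma \cdot x)}) \;\geq\; \delta. \qquad (\ast)
\]

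It remains to contradict $(\ast)$ using freeness of $\alpha$. Since $\alpha$ is free, $\langle \gamma \rangle$ acts freely on a conull Borel set $X_0 \subseteq X$. If $\gamma$ has finite order $m \geq 2$, the $\langle \gamma \rangle$-orbits in $X_0$ have size $m$ and admit a Borel transversal $A$; defining $f \equiv i$ on $\gamma^i A$ for $0 \leq i < m$ gives $f \in \Meas_m(X,\mu)$ with $\mu(\set{x : f(x) = f(\gamma \cdot x)}) = 0$. If $\gamma$ has infinite order, Rokhlin's lemma supplies, for every $n \in \N^+$ and $\eta > 0$, a Borel set $A \subseteq X_0$ with $A, \gamma A, \dots, \gamma^{n-1} A$ pairwise disjoint and of total measure at least $1 - \eta$; coloring $\gamma^i A$ by $i \bmod 2$ and the complement by $0$ produces $f \in \Meas_2(X,\mu)$ with
\[
\mu(\set{x \in X : f(x) = f(\gamma \cdot x)}) \;\leq\; \mu(\gamma^{n-1} A) + \eta \;\leq\; \tfrac{1}{n} + \eta,
\]
which drops below $\delta$ for $n$ large and $\eta$ small. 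The main obstacle is this last construction in the infinite-order case: it hinges on Rokhlin's lemma together with a simple boundary estimate at the top of the tower. Once available, $\delta = 0$ for every $\gamma \neq \mathbf{1}$, and so $\beta$ is free.
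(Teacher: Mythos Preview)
The paper does not supply its own proof of this theorem: it is simply quoted from \cite[Theorem~3.4]{BK} and used as a black box. So there is no ``paper's proof'' to compare against.

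That said, your argument is correct and self-contained. The transfer step is exactly right: the functional $v \mapsto \sum_{i<k} v(\gamma,i,i)$ is continuous on $\mathbb{I}_{\set{\gamma},k}$, so the lower bound $\delta$ propagates from $\wecSetFun{\beta}{\set{\gamma}}{k}{\Meas_k(Y,\nu)}$ to its closure $\wecSet{\beta}{\set{\gamma}}{k}$ and then, via the inclusion $\wecSet{\alpha}{\set{\gamma}}{k}\subseteq\wecSet{\beta}{\set{\gamma}}{k}$, to every $\wecSetFun{\alpha}{\set{\gamma}}{k}{f}$. The construction contradicting $(\ast)$ is also fine: for finite-order $\gamma$ the orbit equivalence relation of $\langle\gamma\rangle$ on the free part is smooth, so a Borel transversal exists and your colouring gives coincidence measure $0$; for infinite-order $\gamma$ the single automorphism $x\mapsto\gamma\cdot x$ is aperiodic on a conull invariant set, Rokhlin's lemma applies, and your tower estimate $\mu(\gamma^{n-1}A)\leq 1/n$ (from disjointness of the levels and measure preservation) plus the error $\eta$ beats any fixed $\delta>0$. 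One cosmetic remark: your opening sentence promises to ``deduce \ldots that $\alpha$ too has a $\gamma$-fixed set of positive measure,'' but what you actually (and correctly) deduce is the weaker combinatorial obstruction $(\ast)$; the freeness of $\alpha$ then contradicts $(\ast)$ directly, without ever producing fixed points for $\alpha$.
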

	
	\subsection{The space of weak equivalence classes}\label{subsec:top_defn}
	
	We now proceed to define the topology on $\W_\G$. For $S \in \fins{\G}$ and $k \in \N^+$, the cube $\mathbb{I}_{S, k} = [0;1]^{S \times k \times k}$ is equipped with the \emphd{$\infty$-metric}:
	\[
	\dist_\infty(u, v) = \|u - v\|_\infty \defeq \max_{\gamma, i, j} |u(\gamma, i, j) - v(\gamma,i,j)|.
	\]
	Let $\K(\mathbb{I}_{S, k})$ denote the set of all nonempty compact subsets of $\mathbb{I}_{S, k}$. For $C \in \K(\mathbb{I}_{S, k})$, let
	\[
	\Ball_\epsilon(C) \defeq \set{u \in \mathbb{I}_{S, k} \,:\, \dist_\infty(u, C) < \epsilon}.
	\]
	Define the \emphd{Hausdorff metric} on $\K(\mathbb{I}_{S, k})$ by
	\[
	\dist_H(C_1, C_2) \defeq \inf \set{\epsilon > 0 \,:\, C_1 \subseteq \Ball_\epsilon(C_2) \text{ and } C_2 \subseteq \Ball_\epsilon(C_1)}.
	\]
	This metric makes $\K(\mathbb{I}_{S, k})$ into a compact space \cite[Theorem 4.26]{K_DST}. By definition, for any \pmp action $\alpha$, we have $\wecSet {\alpha} S k \in \K(\mathbb{I}_{S, k})$, so $\W_\G$ is a subset of the compact metrizable space \[\prod_{S, k} \K(\mathbb{I}_{S, k}),\] where the product is over all $S \in \fins{\G}$ and $k \in \N^+$, and as such, $\W_\G$ inherits a relative topology.
	
	The following fundamental result is due to Ab\'ert and Elek:
	
	\begin{theo}[{Ab\'ert--Elek \cite[Theorem~1]{AbertElek}; see also \cite[Theorem~10.1]{BK}}]
		The set $\W_\G$ is closed in $\prod_{S, k} \K(\mathbb{I}_{S,k})$. In other words, the space $\W_\G$ is compact.
	\end{theo}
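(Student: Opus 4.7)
The task is to show that $\W_\G$ is closed in $\prod_{S,k} \K(\mathbb{I}_{S,k})$. Since that product is metrizable, it suffices to establish sequential closedness: given \pmp actions $\alpha_n \colon \G \acts (X_n, \mu_n)$ on atomless standard spaces with $\wec{\alpha_n} \to (C_{S,k})_{S,k}$ in the product topology, construct a \pmp action $\alpha$ on an atomless standard probability space with $\wecSet{\alpha}{S}{k} = C_{S,k}$ for all $S \in \fins{\G}$ and $k \in \N^+$.

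My plan is to use the metric ultraproduct / Loeb construction. Fix a nonprincipal ultrafilter $\mathcal{U}$ on $\N$ and form the Loeb probability space $(X_\ast, \mu_\ast) \defeq \prod_\mathcal{U} (X_n, \mu_n)$ together with the ultraproduct action $\alpha_\ast$ of $\G$; this is an atomless (typically non-separable) \pmp action. The key observation is that every $f \in \Meas_k(X_\ast, \mu_\ast)$ is, up to a $\mu_\ast$-null set, represented by an ultraproduct sequence $f = [f_n]_\mathcal{U}$ with $f_n \in \Meas_k(X_n, \mu_n)$, and for such $f$ a direct Loeb-measure computation yields
\[
\wecSetFun{\alpha_\ast}{S}{k}{f} \;=\; \lim_{n \to \mathcal{U}} \wecSetFun{\alpha_n}{S}{k}{f_n}.
\]
Hausdorff convergence $\wecSet{\alpha_n}{S}{k} \to C_{S,k}$ combined with closedness of $C_{S,k}$ then immediately gives $\wecSet{\alpha_\ast}{S}{k} \subseteq C_{S,k}$, while the reverse inclusion follows because any $v \in C_{S,k}$ can be approximated by vectors $\wecSetFun{\alpha_n}{S}{k}{f_n}$ (using Hausdorff convergence together with the density of $\theta$-vectors in the closure defining $\wecSet{\alpha_n}{S}{k}$), whose ultralimit $f \defeq [f_n]_\mathcal{U}$ satisfies $\wecSetFun{\alpha_\ast}{S}{k}{f} = v$.

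The main obstacle is that $(X_\ast, \mu_\ast)$ is not standard. I would resolve this by descending to a separable $\G$-invariant factor of $\alpha_\ast$. For each pair $(S, k)$, fix a countable dense subset $D_{S,k} \subseteq C_{S,k}$, realize each element of $D_{S,k}$ by some function $f \in \Meas_k(X_\ast, \mu_\ast)$ as above, and additionally include a chain of sets $B_r \subseteq X_\ast$, indexed by $r \in \Q \cap [0;1]$, with $\mu_\ast(B_r) = r$ (which exists by atomlessness of the Loeb measure). Let $\mathcal{B}$ be the sub-$\sigma$-algebra generated by the level sets of all these functions together with all $\G$-translates thereof; it is countably generated and $\G$-invariant. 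The factor action $\alpha$ of $\G$ associated to $\mathcal{B}$ is a standard atomless \pmp action (standardness from countable generation of $\mathcal{B}$, atomlessness from the chain of $B_r$'s), and by construction $D_{S,k} \subseteq \wecSet{\alpha}{S}{k} \subseteq \wecSet{\alpha_\ast}{S}{k} = C_{S,k}$; since $\wecSet{\alpha}{S}{k}$ is closed and $D_{S,k}$ is dense in $C_{S,k}$, we conclude $\wecSet{\alpha}{S}{k} = C_{S,k}$. The principal technical subtlety is justifying the displayed ultraproduct formula for the Loeb measure and confirming that the chosen factor is genuinely atomless; once these points are settled, the rest is standard measure-theoretic bookkeeping.
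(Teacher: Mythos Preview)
The paper does not give its own proof of this theorem: it is quoted as an external result of Ab\'ert and Elek (with a pointer also to \cite[Theorem~10.1]{BK}), so there is no in-paper argument to compare against. That said, your proposal is essentially the original Ab\'ert--Elek ultraproduct argument, and it is correct in outline: the Loeb-measure identity $\wecSetFun{\alpha_\ast}{S}{k}{[f_n]_\mathcal{U}} = \lim_{n\to\mathcal{U}} \wecSetFun{\alpha_n}{S}{k}{f_n}$ is exactly how one shows $\wecSet{\alpha_\ast}{S}{k} = C_{S,k}$, and passing to a countably generated $\G$-invariant sub-$\sigma$-algebra containing enough witnesses (plus a measure-dense chain to force atomlessness) is the standard way to land back in the category of standard probability spaces. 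The two points you flag as ``technical subtleties'' are genuine but routine: the displayed formula is immediate from the definition of Loeb measure on internal sets (and every Loeb-measurable $k$-valued map is a.e.\ internal), and atomlessness of the factor follows because your chain $\{B_r\}$ gives, inside the factor $\sigma$-algebra, sets of every rational measure. One cosmetic remark: for the inclusion $C_{S,k} \subseteq \wecSet{\alpha_\ast}{S}{k}$ you should choose, for each fixed $v \in C_{S,k}$, a single sequence $(f_n)$ with $\dist_\infty(\wecSetFun{\alpha_n}{S}{k}{f_n}, v) \to 0$ along $\mathcal{U}$; your text already says this, but make sure the write-up makes the diagonalization explicit rather than invoking density loosely.
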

	
	The subspace $\WFree_\G$ is also compact:
	
	\begin{theo}[{\cite[Corollary~10.7]{BK}}]
		The set $\WFree_\G$ is closed in $\W_\G$.
	\end{theo}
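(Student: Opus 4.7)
The plan is to reduce closedness of $\WFree_\G$ to a single-coordinate criterion that detects freeness from the compact sets $\wecSet{\alpha}{\{\gamma\}}{3}$, and then to verify that this criterion passes to Hausdorff limits.

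Specifically, I would establish that a \pmp action $\alpha$ is free if and only if, for every $\mathbf{1} \neq \gamma \in \G$ and every $\epsilon > 0$, there exists $v \in \wecSet{\alpha}{\{\gamma\}}{3}$ with $\sum_{i < 3} v(\gamma, i, i) < \epsilon$. The ``if'' direction is immediate from the pointwise inequality $\sum_{i < 3} \mu(\{x : f(x) = f(\gamma x) = i\}) \geq \mu(\{x : \gamma x = x\})$ valid for every $f \in \Meas_3(X, \mu)$; this lower bound is preserved in the closure defining $\wecSet{\alpha}{\{\gamma\}}{3}$. For the ``only if'' direction I split on the order of $\gamma$. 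If $\gamma$ has infinite order, freeness of $\alpha$ makes the $\gamma$-action aperiodic, so the classical Rokhlin lemma furnishes, for arbitrarily large $N$, a tower $A, \gamma A, \ldots, \gamma^{N-1} A$ of pairwise disjoint sets nearly covering $X$; coloring $\gamma^i A$ by $i \bmod 3$ (and the complement arbitrarily) produces $f$ whose monochromatic $\gamma$-edges are concentrated on the top floor $\gamma^{N-1} A$ and on the complement, of total mass $O(1/N + \epsilon)$. If $\gamma$ has finite order $n \geq 2$, the action of $\langle\gamma\rangle$ is free and hence admits a Borel fundamental domain $D$; lifting any proper 3-coloring of the cycle $C_n$ (which exists since $\chi(C_n) \leq 3$ for every $n \geq 2$) orbit-by-orbit yields $f \in \Meas_3(X, \mu)$ with no monochromatic $\gamma$-edges at all.

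Given the criterion, closedness follows by a direct compactness argument. Let $\mathfrak{a}_n \to \mathfrak{a}$ in $\W_\G$ with $\mathfrak{a}_n \in \WFree_\G$, pick representatives $\alpha_n$ and $\alpha$, and fix $\mathbf{1} \neq \gamma \in \G$. By definition of the topology on $\W_\G$, we have $\wecSet{\alpha_n}{\{\gamma\}}{3} \to \wecSet{\alpha}{\{\gamma\}}{3}$ in the Hausdorff metric on $\K(\mathbb{I}_{\{\gamma\}, 3})$. Applying the criterion to each free $\alpha_n$, choose $v_n \in \wecSet{\alpha_n}{\{\gamma\}}{3}$ with $\sum_i v_n(\gamma, i, i) < 1/n$. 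By compactness of $\mathbb{I}_{\{\gamma\}, 3}$ and Hausdorff convergence, a subsequence of $(v_n)$ converges to some $v \in \wecSet{\alpha}{\{\gamma\}}{3}$ with $\sum_i v(\gamma, i, i) = 0$. The ``if'' direction of the criterion applied to $\alpha$ then forces $\mu(\{x : \gamma x = x\}) = 0$. As $\gamma \neq \mathbf{1}$ was arbitrary, $\alpha$ is free, so $\mathfrak{a} \in \WFree_\G$.

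The main technical point is the ``only if'' direction of the criterion, and in particular its uniform handling of torsion and torsion-free $\gamma$. The value $k = 3$ is essentially forced by this uniformity: for $k = 2$ one cannot properly 2-color $C_n$ when $n$ is odd, so torsion elements of odd order would leave an irreducible baseline of monochromatic pairs and break the equivalence.
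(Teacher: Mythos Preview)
The paper does not supply its own proof of this statement; it is quoted from \cite[Corollary~10.7]{BK} as a known result and used as a black box, so there is no argument in the paper to compare against.

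Your proof is correct and self-contained. The criterion you extract---that freeness of $\alpha$ is equivalent to $\inf\bigl\{\sum_{i} v(\gamma,i,i) : v \in \wecSet{\alpha}{\{\gamma\}}{3}\bigr\} = 0$ for each nonidentity $\gamma$---is exactly what is needed to make freeness visible as a closed condition in the coordinates of $\prod_{S,k}\K(\mathbb{I}_{S,k})$. Your verification of both directions is sound: the lower bound $\sum_i v(\gamma,i,i)\geq \mu(\{x:\gamma\cdot x = x\})$ holds for every $\wecSetFun{\alpha}{\{\gamma\}}{3}{f}$ and passes to the closure by continuity; Rokhlin towers handle aperiodic $\gamma$; and proper $3$-colorings of finite cyclic orbits (via a Borel fundamental domain) handle torsion $\gamma$. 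The Hausdorff-limit step is also correct, since if $C_n \to C$ in $\K(\mathbb{I}_{\{\gamma\},3})$ and $v_n \in C_n$ with $v_n \to v$, then $v \in C$. Your remark that $k=3$ is forced by odd-order torsion is apt.
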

	
	\subsection{The map $\wecSetFun \alpha S k -$ is Lipschitz}
	
	The purpose of this short subsection is to record the following simple observation:
	
	\begin{prop}\label{prop:Lipschitz}
		Let $\alpha \colon \G \acts (X, \mu)$ be a \pmp action of $\G$. If $k \in \N^+$ and  $f$, $g \in \Meas_k(X,\mu)$, then, for any $S \in \fins{\G}$,
		\[
		\dist_\infty(\wecSetFun \alpha S k f, \wecSetFun \alpha S k {g}) \,\leq\, 2 \cdot \dist_\mu(f, g).
		\]
	\end{prop}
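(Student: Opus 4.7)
The plan is to bound the difference coordinate-by-coordinate and then take the supremum to obtain the $\infty$-metric estimate. Fix $\gamma \in S$ and $i$, $j < k$. I need to show
\[
\bigl|\mu(\set{x : f(x) = i,\ f(\gamma \cdot x) = j}) - \mu(\set{x : g(x) = i,\ g(\gamma \cdot x) = j})\bigr| \,\leq\, 2\dist_\mu(f,g),
\]
because taking the maximum over $(\gamma, i, j)$ then yields the claim.

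The key idea is to localize the disagreement. Let $A \defeq \set{x \in X : f(x) \neq g(x)}$, so that $\mu(A) = \dist_\mu(f,g)$. Since $\alpha$ preserves $\mu$, the shifted set $B \defeq \gamma^{-1} \cdot A = \set{x \in X : f(\gamma \cdot x) \neq g(\gamma \cdot x)}$ also has measure exactly $\dist_\mu(f,g)$. On the complement $X \setminus (A \cup B)$ we have both $f(x) = g(x)$ and $f(\gamma \cdot x) = g(\gamma \cdot x)$, so the sets $\set{f(x) = i,\ f(\gamma \cdot x) = j}$ and $\set{g(x) = i,\ g(\gamma \cdot x) = j}$ coincide outside $A \cup B$. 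Hence their symmetric difference is contained in $A \cup B$, whose measure is at most $\mu(A) + \mu(B) = 2\dist_\mu(f,g)$.

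Since for any two measurable sets $E$, $F$ one has $|\mu(E) - \mu(F)| \leq \mu(E \symdif F)$, the displayed bound follows immediately, and taking the maximum over $\gamma \in S$ and $i$, $j < k$ gives the Lipschitz estimate. The only mild subtlety is the use of measure-preservation to equate $\mu(B)$ with $\mu(A)$; everything else is a routine symmetric-difference computation, so I do not anticipate any genuine obstacle.
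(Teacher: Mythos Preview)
Your proof is correct and follows essentially the same route as the paper: both bound the symmetric difference of the events $\set{f(x)=i,\ f(\gamma\cdot x)=j}$ and $\set{g(x)=i,\ g(\gamma\cdot x)=j}$ by the union of the disagreement set $\set{f\neq g}$ and its $\gamma^{-1}$-shift, then use measure preservation. The only cosmetic difference is that the paper names the two events $A$ and $B$ rather than the disagreement sets, and leaves the measure-preservation step implicit.
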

	\begin{proof}[\textsc{Proof}]
		Take any $\gamma \in S$ and $i$, $j < k$ and let
		\[
			A \defeq \set{x \in X \,:\, f(x) = i, \ f(\gamma \cdot x) = j} \qquad \text{and} \qquad B \defeq \set{x \in X \,:\, g(x) = i, \ g(\gamma \cdot x) = j}.
		\]
		Then, by definition,
		\[
		|\wecSetFun \alpha S k f (\gamma, i, j) \,-\, \wecSetFun \alpha S k {g} (\gamma, i, j) | \,=\, |\mu(A) \,-\, \mu(B)| \,\leq\, \mu(A \symdif B).
		\]
		If $x \in A \symdif B$, then $f(x) \neq g(x)$ or $f(\gamma \cdot x) \neq g(\gamma \cdot x)$, so $\mu(A \symdif B) \leq 2 \cdot \dist_\mu(f,g)$, as desired.
	\end{proof}
	
	\section{Step functions}\label{sec:step}
	
	\noindent In this section we establish some basic facts pertaining to step functions on products of probability spaces. In particular, we show that multiplication is a well-defined operation on $\W_\G$.
	
	To begin with, we need a few definitions. Let $(X, \mu)$ and $(Y, \nu)$ be standard probability spaces and let $k$, $N \in \N^+$. We call a map $f \in \Meas_k(X \times Y, \mu \times \nu)$ an \emphd{$N$-step function} if there exist
	\[
	g \in \Meas_N(X, \mu), \qquad h \in \Meas_N(Y, \nu), \qquad \text{and} \qquad \phi \colon N \times N \to k,
	\]
	such that $f = \phi \circ (g, h)$, i.e., we have
	\[
	f(x, y) = \phi(g(x), h(y)) \qquad \text{for all } x \in X \text{ and } y \in Y.
	\]
	Let $\Step_{k,N}(X, \mu; Y, \nu) \subseteq \Meas_k(X \times Y, \mu \times \nu)$ denote the set of all $N$-step functions and let
	\begin{equation}\label{eq:step}
	\Step_k(X, \mu; Y, \nu) \defeq \bigcup_{N \in \N^+} \Step_{k,N}(X, \mu; Y, \nu).
	\end{equation}
	The maps in $\Step_k(X, \mu; Y, \nu)$ are called \emphd{step functions}. Note that the union in~\eqref{eq:step} is increasing. It is a basic fact in measure theory that the set $\Step_k(X, \mu; Y, \nu)$ is dense in $\Meas_k(X \times Y, \mu \times \nu)$.

	It will be useful to have a concrete description of the vectors of the form $\wecSetFun {\alpha \times \beta} S k f$, where $f$ is a step function. To that end, we introduce the following operation:
	
	\begin{defn}
		Let $k$, $N \in \N^+$ and $\phi \colon N \times N \to k$. Given $S \in \fins{\G}$ and vectors $u$, $v \in \R^{S\times N \times N}$, the \emphd{$\phi$-convolution} $u \ast_\phi v \in \R^{S \times k \times k}$ of $u$ and $v$ is given by the formula
	\[
	(u \ast_\phi v)(\gamma, i, j) \defeq \sum_{(a, b) \in \phi^{-1}(i)}\, \sum_{(c, d) \in \phi^{-1}(j)} \, u(\gamma, a, c) \cdot v(\gamma, b, d).
	\]
	\end{defn}

	The next proposition is an immediate consequence of the definitions:
	
	\begin{prop}\label{prop:conv}
		Let $\alpha \colon \G \acts (X, \mu)$, $\beta \colon \G \acts (Y, \nu)$ be \pmp actions of~$\G$. Let $k$, $N \in \N^+$ and
		\[
		g \in \Meas_N(X, \mu), \qquad h \in \Meas_N(Y, \nu), \qquad \text{and} \qquad \phi \colon N \times N \to k.
		\]
		Set $f \coloneqq \phi \circ (g, h)$. Then, for any $S \in \fins{\G}$,
		\[
		\wecSetFun {\alpha \times \beta} S k f \,=\, \wecSetFun \alpha S N g \ast_\phi \wecSetFun \beta S N h.
		\]
	\end{prop}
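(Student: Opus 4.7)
The plan is to simply unpack both sides of the claimed identity using the definitions of $\wecSetFun{\cdot}{S}{k}{\cdot}$, the product action, and $\phi$-convolution, and verify that they coincide coordinate-by-coordinate. This is a routine computation, so I will present it as a directed chain of equalities rather than as a series of separate steps.

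Fix $\gamma \in S$ and $i$, $j < k$. By definition of $\wecSetFun{\cdot}{S}{k}{\cdot}$ applied to the product action $\alpha \times \beta$, together with the defining formula $f(x,y) = \phi(g(x), h(y))$, I would rewrite
\[
\wecSetFun{\alpha \times \beta}{S}{k}{f}(\gamma, i, j) \,=\, (\mu \times \nu)\bigl(\set{(x,y) \,:\, \phi(g(x), h(y)) = i,\ \phi(g(\gamma \cdot x), h(\gamma \cdot y)) = j}\bigr).
\]
The event on the right decomposes as the disjoint union, over pairs $(a,b) \in \phi^{-1}(i)$ and $(c,d) \in \phi^{-1}(j)$, of the product sets
\[
\set{x \in X \,:\, g(x) = a,\ g(\gamma \cdot x) = c} \,\times\, \set{y \in Y \,:\, h(y) = b,\ h(\gamma \cdot y) = d}.
\]
(The disjointness is because $g(x)$, $g(\gamma \cdot x)$, $h(y)$, $h(\gamma \cdot y)$ take single values at each $(x,y)$.)

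Applying the product measure formula to each summand turns the measure of each such rectangle into
\[
\mu\bigl(\set{x \,:\, g(x) = a,\ g(\gamma \cdot x) = c}\bigr) \cdot \nu\bigl(\set{y \,:\, h(y) = b,\ h(\gamma \cdot y) = d}\bigr) \,=\, \wecSetFun{\alpha}{S}{N}{g}(\gamma, a, c) \cdot \wecSetFun{\beta}{S}{N}{h}(\gamma, b, d).
\]
Summing over $(a,b) \in \phi^{-1}(i)$ and $(c,d) \in \phi^{-1}(j)$ yields exactly the definition of the $\phi$-convolution at $(\gamma, i, j)$, completing the proof.

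There is no real obstacle here: the only thing to be careful about is the bookkeeping of which variable ($g$ vs.\ $h$, first vs.\ second coordinate of $\phi^{-1}$) goes where, and correctly pairing up $(a,c)$ as the $\alpha$-slot and $(b,d)$ as the $\beta$-slot so that the final expression matches the convention in the definition of $\ast_\phi$.
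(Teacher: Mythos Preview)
Your proof is correct and is exactly the unpacking that the paper leaves implicit: the paper merely states that the proposition ``is an immediate consequence of the definitions,'' and your coordinate-by-coordinate verification is precisely that immediate consequence spelled out.
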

	
	It is useful to note that the $\phi$-convolution operation is Lipschitz on $\mathbb{I}_{S,N}$:
	
	\begin{prop}\label{prop:conv_Lip}
		Let $k$, $N \in \N^+$ and $\phi \colon N \times N \to k$. For all $S \in \fins{\G}$ and $u$, $v$, $\tilde{u}$, $\tilde{v} \in \mathbb{I}_{S,N}$,
		\[
		\dist_\infty (u \ast_\phi v, \, \tilde{u} \ast_\phi \tilde{v}) \,\leq\, N^4 \cdot (\dist_\infty(u, \tilde{u}) + \dist_\infty (v, \tilde{v})).
		\]
	\end{prop}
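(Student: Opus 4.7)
\medskip

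\noindent\textbf{Proof proposal.} The plan is to fix coordinates $\gamma \in S$ and $i, j < k$, expand the difference
\[
(u \ast_\phi v)(\gamma,i,j) \,-\, (\tilde{u} \ast_\phi \tilde{v})(\gamma,i,j) \,=\, \sum_{(a,b) \in \phi^{-1}(i)}\,\sum_{(c,d) \in \phi^{-1}(j)} \bigl[ u(\gamma,a,c)\,v(\gamma,b,d) \,-\, \tilde{u}(\gamma,a,c)\,\tilde{v}(\gamma,b,d) \bigr],
\]
and bound each bracketed term via the standard telescoping identity
\[
xy - \tilde{x}\tilde{y} \,=\, (x - \tilde{x})\,y \,+\, \tilde{x}\,(y - \tilde{y}).
\]

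Since all entries of $u, v, \tilde{u}, \tilde{v}$ lie in $[0,1]$, applying this identity with $x = u(\gamma,a,c)$, $\tilde{x} = \tilde{u}(\gamma,a,c)$, $y = v(\gamma,b,d)$, $\tilde{y} = \tilde{v}(\gamma,b,d)$ and taking absolute values yields
\[
\bigl| u(\gamma,a,c)\,v(\gamma,b,d) \,-\, \tilde{u}(\gamma,a,c)\,\tilde{v}(\gamma,b,d) \bigr| \,\leq\, \dist_\infty(u,\tilde{u}) \,+\, \dist_\infty(v,\tilde{v})
\]
for every choice of indices. The outer sum has $|\phi^{-1}(i)| \cdot |\phi^{-1}(j)|$ terms, and since $\phi^{-1}(i), \phi^{-1}(j) \subseteq N \times N$, this count is at most $N^2 \cdot N^2 = N^4$. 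Combining these two estimates gives the desired bound coordinate-wise, and hence in the $\infty$-metric.

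There is no real obstacle here; the only point to watch is that one must use the fact that entries lie in $[0,1]$ (so the $y$ and $\tilde{x}$ factors in the telescoping identity are bounded by $1$), which is precisely why the statement is phrased for vectors in $\mathbb{I}_{S,N}$ rather than in arbitrary $\R^{S \times N \times N}$.
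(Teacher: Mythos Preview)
Your proof is correct and is essentially the same as the paper's. The only cosmetic difference is that the paper first applies the triangle inequality at the level of the full convolution (splitting through the intermediate point $\tilde{u} \ast_\phi v$) and then bounds each half, whereas you apply the telescoping identity inside each summand; both reduce to the same estimate $|\phi^{-1}(i)|\cdot|\phi^{-1}(j)| \leq N^4$ together with the fact that the entries lie in $[0,1]$.
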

	\begin{proof}[\textsc{Proof}]
		Since
		\[
		\dist_\infty (u \ast_\phi v, \, \tilde{u} \ast_\phi \tilde{v}) \,\leq\, \dist_\infty (u \ast_\phi v, \, \tilde{u} \ast_\phi v) + \dist_\infty (\tilde{u} \ast_\phi v, \, \tilde{u} \ast_\phi \tilde{v}),
		\]
		it suffices to prove the inequality when, say, $v = \tilde{v}$. To that end, take $\gamma \in S$ and $i$, $j < k$. We have
		\begin{align*}
		|(u \ast_\phi v)(\gamma, i, j) \,-\, (\tilde{u} \ast_\phi v)&(\gamma, i, j)| \,\leq\, \sum_{(a, b) \in \phi^{-1}(i)}\, \sum_{(c, d) \in \phi^{-1}(j)} \, |u(\gamma, a, c) \,-\, \tilde{u}(\gamma, a, c)| \cdot v(\gamma, b, d)\\
		&\leq\, |\phi^{-1}(i)|\cdot |\phi^{-1}(j)| \cdot \dist_\infty(u, \tilde{u}) \,\leq\, N^4 \cdot \dist_\infty(u, \tilde{u}). \qedhere
		\end{align*}
	\end{proof}
	
	\begin{corl}\label{corl:mult}
		If $\alpha$, $\tilde{\alpha}$, and $\beta$ are \pmp actions of $\G$ and $\alpha \preceq \tilde{\alpha}$, then $\alpha \times \beta \preceq \tilde{\alpha} \times \beta$. In particular, the multiplication operation on $\W_\G$ is well-defined.
	\end{corl}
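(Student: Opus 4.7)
The plan is to unpack what $\alpha \times \beta \preceq \tilde{\alpha} \times \beta$ means and then reduce it, via a two-stage approximation, to a direct use of the hypothesis $\alpha \preceq \tilde{\alpha}$. Fix $S \in \fins{\G}$ and $k \in \N^+$; since $\wecSet{\tilde{\alpha} \times \beta}{S}{k}$ is by definition a closed subset of $\mathbb{I}_{S,k}$ and $\wecSet{\alpha \times \beta}{S}{k}$ is the closure of $\wecSetFun{\alpha \times \beta}{S}{k}{\Meas_k(X \times Y, \mu \times \nu)}$, it suffices to prove that $\wecSetFun{\alpha \times \beta}{S}{k}{f} \in \wecSet{\tilde{\alpha} \times \beta}{S}{k}$ for every $f \in \Meas_k(X \times Y, \mu \times \nu)$.

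\textbf{First stage:} Reduce to step functions. Since $\Step_k(X, \mu; Y, \nu)$ is dense in $\Meas_k(X \times Y, \mu \times \nu)$, any $f$ is a $\dist_\mu$-limit of step functions $f_n$. By Proposition~\ref{prop:Lipschitz} applied to $\alpha \times \beta$, the vectors $\wecSetFun{\alpha \times \beta}{S}{k}{f_n}$ converge to $\wecSetFun{\alpha \times \beta}{S}{k}{f}$ in $\dist_\infty$. Closedness of $\wecSet{\tilde{\alpha} \times \beta}{S}{k}$ then lets us assume $f$ itself is a step function, say $f = \phi \circ (g,h)$ with $g \in \Meas_N(X,\mu)$, $h \in \Meas_N(Y,\nu)$, and $\phi \colon N \times N \to k$.

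\textbf{Second stage:} Use weak containment to replace $\alpha$ by $\tilde{\alpha}$. Write $\tilde{\alpha} \colon \G \acts (X',\mu')$. By Proposition~\ref{prop:conv},
\[
\wecSetFun{\alpha \times \beta}{S}{k}{f} \,=\, \wecSetFun{\alpha}{S}{N}{g} \ast_\phi \wecSetFun{\beta}{S}{N}{h}.
\]
The hypothesis $\alpha \preceq \tilde{\alpha}$ gives $\wecSetFun{\alpha}{S}{N}{g} \in \wecSet{\alpha}{S}{N} \subseteq \wecSet{\tilde{\alpha}}{S}{N}$, so there is a sequence $g_n \in \Meas_N(X',\mu')$ with $\wecSetFun{\tilde{\alpha}}{S}{N}{g_n} \to \wecSetFun{\alpha}{S}{N}{g}$. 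By Proposition~\ref{prop:conv_Lip}, $\phi$-convolution with the fixed vector $\wecSetFun{\beta}{S}{N}{h}$ is continuous, so
\[
\wecSetFun{\tilde{\alpha} \times \beta}{S}{k}{\phi \circ (g_n, h)} \,=\, \wecSetFun{\tilde{\alpha}}{S}{N}{g_n} \ast_\phi \wecSetFun{\beta}{S}{N}{h} \,\longrightarrow\, \wecSetFun{\alpha}{S}{N}{g} \ast_\phi \wecSetFun{\beta}{S}{N}{h} \,=\, \wecSetFun{\alpha \times \beta}{S}{k}{f},
\]
where the first equality uses Proposition~\ref{prop:conv} again. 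The left-hand vectors lie in $\wecSetFun{\tilde{\alpha} \times \beta}{S}{k}{\Meas_k(X' \times Y, \mu' \times \nu)}$, hence the limit lies in its closure $\wecSet{\tilde{\alpha} \times \beta}{S}{k}$, establishing $\alpha \times \beta \preceq \tilde{\alpha} \times \beta$.

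\textbf{Well-definedness.} If $\alpha \simeq \tilde{\alpha}$ and $\beta \simeq \tilde{\beta}$, applying the main inequality in both directions yields $\alpha \times \beta \simeq \tilde{\alpha} \times \beta$, and then, using that the product of actions is isomorphic to its transpose (so symmetric in its two arguments), $\tilde{\alpha} \times \beta \simeq \tilde{\alpha} \times \tilde{\beta}$; thus $\wec{\alpha \times \beta}$ depends only on $\wec{\alpha}$ and $\wec{\beta}$. No serious obstacle is anticipated: the argument is purely a double application of density (of step functions in $\Meas_k$, and of $\wecSet{\alpha}{S}{N}$-approximations coming from weak containment), with Propositions~\ref{prop:Lipschitz}, \ref{prop:conv}, and \ref{prop:conv_Lip} doing all the bookkeeping.
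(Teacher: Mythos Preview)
Your argument is correct and follows essentially the same route as the paper: reduce to step functions via density and Proposition~\ref{prop:Lipschitz}, then use Proposition~\ref{prop:conv} to write the vector as a $\phi$-convolution and invoke $\alpha \preceq \tilde{\alpha}$ together with the Lipschitz bound from Proposition~\ref{prop:conv_Lip}. The only cosmetic difference is that you phrase the second stage with sequences and limits, whereas the paper uses a single $\epsilon N^{-4}$-approximation; your explicit deduction of well-definedness via symmetry of the product is also fine (the paper leaves this implicit).
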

	\begin{proof}[\textsc{Proof}]
		Let $\alpha \colon \G \acts (X, \mu)$, $\tilde{\alpha} \colon \G \acts (\tilde{X}, \tilde{\mu})$, and $\beta \colon \G \acts (Y, \nu)$ be \pmp actions of $\G$ and suppose that $\alpha \preceq \tilde{\alpha}$. Take any $S \in \fins{\G}$ and $k \in \N^+$. By Proposition~\ref{prop:Lipschitz} and since $\Step_k(X, \mu; Y, \nu)$ is dense in $\Meas_k(X \times Y, \mu \times \nu)$, it suffices to show that for all $f \in \Step_{k}(X, \mu; Y, \nu)$ and $\epsilon > 0$, there is $\tilde{f} \in \Meas_k(\tilde{X} \times Y, \tilde{\mu} \times \nu)$ such that
		\[
		\dist_\infty(\wecSetFun {\alpha \times \beta} S k f, \, \wecSetFun {\tilde{\alpha} \times \beta} S k {\tilde{f}}) \,< \,\epsilon.
		\]
		Let $N \in \N^+$, $g \in \Meas_N(X, \mu)$, $h \in \Meas_N(Y, \nu)$, and $\phi \colon N \times N \to k$ be such that $f = \phi \circ (g, h)$. Since $\alpha \preceq \tilde{\alpha}$, there is a map $\tilde{g} \in \Meas_N(\tilde{X}, \tilde{\mu})$ with
		\[
		\dist_\infty(\wecSetFun \alpha S N g,\, \wecSetFun {\tilde{\alpha}} S N {\tilde{g}}) \,<\, \epsilon N^{-4}.
		\]
		From Propositions~\ref{prop:conv} and \ref{prop:conv_Lip}, it follows that the map $\tilde{f} \defeq \phi \circ (\tilde{g}, h)$ is as desired.
	\end{proof}

	\section{A criterion of continuity}\label{sec:criterion}
	
	\noindent The purpose of this section is to establish an explicit necessary and sufficient condition for the continuity of multiplication on  the space of weak equivalence classes.
	
	Recall that a subset $Y$ of a metric space $X$ is called an \emphd{$\epsilon$-net} if for every $x \in X$, there is $y \in Y$ such that the distance between $x$ and $y$ is less than $\epsilon$. 
	
	\begin{lemdef}
		Let $\alpha \colon \G \acts (X, \mu)$ and $\beta \colon \G \acts (Y, \nu)$ be \pmp actions of~$\G$. For any $S \in \fins{\G}$, $k \in \N^+$, and $\epsilon > 0$, there exists $N \in \N^+$ such that the set
		\[
			\wecSetFun {\alpha \times \beta} S k {\Step_{k,N}(X, \mu; Y, \nu)}
		\]
		is an $\epsilon$-net in $\wecSet {\alpha \times \beta} S k$. We denote the smallest such $N$ by $N_{S,k}(\alpha, \beta, \epsilon)$.
		
		Furthermore, the value $N_{S, k}(\alpha, \beta, \epsilon)$ is determined by the weak equivalence classes of $\alpha$ and $\beta$, so we can define $N_{S, k}(\wec{\alpha}, \wec{\beta}, \epsilon) \defeq N_{S,k}(\alpha, \beta, \epsilon)$.
	\end{lemdef}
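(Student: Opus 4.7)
The plan has two parts: first, prove existence of some $N$ such that the image of $\Step_{k, N}(X, \mu; Y, \nu)$ is an $\epsilon$-net in $\wecSet{\alpha \times \beta}{S}{k}$; then, show that the smallest such $N$ depends only on $\wec{\alpha}$ and $\wec{\beta}$.

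The existence of $N$ follows from two ingredients: (i) $\bigcup_N \Step_{k, N}(X, \mu; Y, \nu)$ is dense in $\Meas_k(X \times Y, \mu \times \nu)$, and (ii) $f \mapsto \wecSetFun{\alpha \times \beta}{S}{k}{f}$ is $2$-Lipschitz by Proposition~\ref{prop:Lipschitz}. Together these imply that the image of step functions is dense in $\wecSet{\alpha \times \beta}{S}{k}$, which is compact; I would cover this compact set by finitely many $\epsilon/2$-balls, select a step function whose image lies within $\epsilon/2$ of each center, and take $N$ to be the largest index needed (using the easy fact $\Step_{k, M} \subseteq \Step_{k, N}$ for $M \leq N$, obtained by trivial padding of $g$, $h$, and $\phi$).

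The invariance is the heart of the statement, and the main subtle point is that the set $\wecSetFun{\alpha \times \beta}{S}{k}{\Step_{k, N}(X, \mu; Y, \nu)}$ itself does depend on the specific representatives, so I would argue instead that its \emph{closure} depends only on the weak equivalence classes. By Proposition~\ref{prop:conv}, every element of this set is precisely of the form $u \ast_\phi v$ with $u \in \wecSetFun{\alpha}{S}{N}{\Meas_N(X,\mu)}$, $v \in \wecSetFun{\beta}{S}{N}{\Meas_N(Y,\nu)}$, and $\phi\colon N \times N \to k$. Using the Lipschitz continuity of $\ast_\phi$ from Proposition~\ref{prop:conv_Lip} together with the compactness of $\wecSet{\alpha}{S}{N}$ and $\wecSet{\beta}{S}{N}$, a routine argument identifies the closure with
\[
\bigcup_{\phi \colon N \times N \to k}\,\set{u \ast_\phi v \,:\, u \in \wecSet{\alpha}{S}{N},\ v \in \wecSet{\beta}{S}{N}},
\]
which is determined by $\wec{\alpha}$ and $\wec{\beta}$ alone. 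To conclude, I would note that whether a set is an $\epsilon$-net in a given space depends only on its closure (since $\dist(x, Y) = \dist(x, \bar Y)$), and that the ambient set $\wecSet{\alpha \times \beta}{S}{k}$ is itself determined by $\wec{\alpha}$ and $\wec{\beta}$ via Corollary~\ref{corl:mult}; hence the same is true of $N_{S, k}(\alpha, \beta, \epsilon)$. The principal obstacle is thus recognizing that one must pass to closures of step-function images to obtain invariance, which is immediately resolved by Propositions~\ref{prop:conv} and~\ref{prop:conv_Lip}.
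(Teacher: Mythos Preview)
Your proposal is correct and uses essentially the same ingredients as the paper: density of step functions together with Proposition~\ref{prop:Lipschitz} and compactness for existence, and Propositions~\ref{prop:conv} and~\ref{prop:conv_Lip} together with Corollary~\ref{corl:mult} for invariance. The only difference is organizational: you identify the closure of $\wecSetFun{\alpha\times\beta}{S}{k}{\Step_{k,N}}$ once and for all as $\bigcup_{\phi}\{u\ast_\phi v:u\in\wecSet{\alpha}{S}{N},\,v\in\wecSet{\beta}{S}{N}\}$ and read off invariance from that description, whereas the paper runs the same estimate pointwise, taking an arbitrary $u\in\wecSet{\tilde\alpha\times\tilde\beta}{S}{k}$, approximating it by a step function for $(\alpha,\beta)$, and then transferring $g,h$ to $\tilde g,\tilde h$ via weak equivalence with error controlled by Proposition~\ref{prop:conv_Lip}. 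Your global formulation is a clean way to package the argument (and makes the role of finiteness of $\{\phi\colon N\times N\to k\}$ and compactness of the $\wecSet{\cdot}{S}{N}$ explicit), but the underlying mechanism is identical.
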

	\begin{proof}[\textsc{Proof}]
		By Proposition~\ref{prop:Lipschitz} and since $\Step_k(X, \mu; Y, \nu)$ is dense in $\Meas_k(X \times Y, \mu \times \nu)$, the set
		\[
			\wecSetFun {\alpha \times \beta} S k {\Step_{k}(X, \mu; Y, \nu)}
		\]
		is dense in $\wecSet {\alpha \times \beta} S k$. The existence of $N_{S,k}(\alpha, \beta, \epsilon)$ then follows since $\wecSet {\alpha \times \beta} S k$ is compact.
		
		To prove the ``furthermore'' part, let $\tilde{\alpha} \colon \G \acts (\tilde{X}, \tilde{\mu})$ and $\tilde{\beta} \colon \G \acts (\tilde{Y}, \tilde{\nu})$ be \pmp actions of $\G$ such that $\alpha \simeq \tilde{\alpha}$ and $\beta \simeq \tilde{\beta}$. Set $N \defeq N_{S,k}(\alpha, \beta, \epsilon)$. We have to show that
		\[
			N_{S,k}(\tilde{\alpha}, \tilde{\beta}, \epsilon) \leq N.
		\]
		Take any $u \in \wecSet {\tilde{\alpha} \times \tilde{\beta}} S k$. Corollary~\ref{corl:mult} implies that $\wecSet {\tilde{\alpha} \times \tilde{\beta}} S k = \wecSet {\alpha \times \beta} S k$, so, by the choice of $N$, there is $f \in \Step_{k, N}(X, \mu; Y, \nu)$ such that
		\[
			\delta \defeq \epsilon - \dist_\infty(u,\wecSetFun {\alpha \times \beta} S k f) \,>\, 0.
		\]
		Let $g \in \Meas_N(X, \mu)$, $h \in \Meas_N(Y, \nu)$, and $\phi \colon N \times N \to k$ be such that $f = \phi \circ (g, h)$. Since we have $\alpha \simeq \tilde{\alpha}$ and $\beta \simeq \tilde{\beta}$, there exist maps $\tilde{g} \in \Meas_N(\tilde{X}, \tilde{\mu})$ and $\tilde{h} \in \Meas_N(\tilde{Y}, \tilde{\nu})$ with
		\[
		\dist_\infty(\wecSetFun {\tilde{\alpha}} S N {\tilde{g}},\, \wecSetFun \alpha S N g) + \dist_\infty(\wecSetFun {\tilde{\beta}} S N {\tilde{h}},\, \wecSetFun \beta S N h) \,<\, \delta N^{-4}.
		\]
		Set $\tilde{f} \defeq \phi \circ (\tilde{g}, \tilde{h})$. Then $\tilde{f} \in \Step_{k, N} (\tilde{X}, \tilde{\mu}; \tilde{Y}, \tilde{\nu})$, and, from Propositions~\ref{prop:conv} and \ref{prop:conv_Lip}, it follows that
		\[
			\dist_\infty(u, \wecSetFun {\tilde{\alpha} \times \tilde{\beta}} S k {\tilde{f}}) \,<\, \epsilon.
		\]
		Since $u$ was chosen arbitrarily, this concludes the proof.
	\end{proof}

	Now we can state the main result of this section:
	
	\begin{theo}\label{theo:criterion}
		Let $\mathcal{C} \subseteq \W_\G \times \W_\G$ be a closed set. The following statements are equivalent:
		\begin{enumerate}[label={\ep{\normalfont{}\arabic*}}]
			\item\label{item:continuous} the map $\mathcal{C} \to \W_\G \colon (\mathfrak{a}, \mathfrak{b}) \mapsto \mathfrak{a} \times \mathfrak{b}$ is continuous;
			
			\item\label{item:bound_on_steps} for all $S \in \fins{\G}$, $k \in \N^+$, and $\epsilon > 0$, there is $N \in \N^+$ such that for all $(\mathfrak{a}, \mathfrak{b}) \in \mathcal{C}$,
			\[
			N_{S, k}(\mathfrak{a}, \mathfrak{b}, \epsilon) \leq N.
			\]
		\end{enumerate}
	\end{theo}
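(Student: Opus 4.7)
The plan is to prove the equivalence in the standard way: \ref{item:bound_on_steps} $\Rightarrow$ \ref{item:continuous} directly, and \ref{item:continuous} $\Rightarrow$ \ref{item:bound_on_steps} by contradiction via compactness of $\W_\G \times \W_\G$ and closedness of $\mathcal{C}$. Throughout, I will rely on the fact that convergence in $\W_\G$ is coordinate\-/wise Hausdorff convergence of the compact sets $\wecSet{\alpha}{S}{k}$, that Propositions~\ref{prop:conv} and \ref{prop:conv_Lip} allow one to produce vectors in $\wecSet{\alpha \times \beta}{S}{k}$ by $\phi$\-/convolving vectors in $\wecSet{\alpha}{S}{N}$ and $\wecSet{\beta}{S}{N}$, and that this convolution has a uniform (in $u$, $v$) Lipschitz constant depending only on $N$.

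For \ref{item:bound_on_steps} $\Rightarrow$ \ref{item:continuous}, fix $S$, $k$, and $\epsilon > 0$, and let $N$ be the uniform bound so that $N$-step realizations form an $\epsilon/4$-net in every product $\wecSet{\alpha \times \beta}{S}{k}$ for $(\wec{\alpha}, \wec{\beta}) \in \mathcal{C}$. Given $(\mathfrak{a}_n, \mathfrak{b}_n) \to (\mathfrak{a}, \mathfrak{b})$ in $\mathcal{C}$, pick representatives and note that $\wecSet{\alpha_n}{S}{N} \to \wecSet{\alpha}{S}{N}$ and $\wecSet{\beta_n}{S}{N} \to \wecSet{\beta}{S}{N}$ in Hausdorff distance. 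To show each point of $\wecSet{\alpha_n \times \beta_n}{S}{k}$ is close to $\wecSet{\alpha \times \beta}{S}{k}$ for large $n$, approximate it within $\epsilon/4$ by $\wecSetFun{\alpha_n \times \beta_n}{S}{k}{f_n}$ with $f_n = \phi \circ (g_n, h_n) \in \Step_{k,N}(X_n, \mu_n; Y_n, \nu_n)$, then use Hausdorff closeness to select $g \in \Meas_N(X, \mu)$ and $h \in \Meas_N(Y, \nu)$ whose vectors approximate $\wecSetFun{\alpha_n}{S}{N}{g_n}$ and $\wecSetFun{\beta_n}{S}{N}{h_n}$ to within $\epsilon/(8N^4)$; Propositions~\ref{prop:conv} and \ref{prop:conv_Lip} then give $\wecSetFun{\alpha \times \beta}{S}{k}{\phi \circ (g,h)}$ within $\epsilon/2$ of the chosen point. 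The reverse inclusion is symmetric, and together they yield $\dist_H(\wecSet{\alpha_n \times \beta_n}{S}{k}, \wecSet{\alpha \times \beta}{S}{k}) < \epsilon$ for large $n$.

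For \ref{item:continuous} $\Rightarrow$ \ref{item:bound_on_steps}, suppose \ref{item:bound_on_steps} fails: there exist $S$, $k$, $\epsilon > 0$ and a sequence $(\mathfrak{a}_n, \mathfrak{b}_n) \in \mathcal{C}$ with $N_{S, k}(\mathfrak{a}_n, \mathfrak{b}_n, \epsilon) > n$. By compactness of $\W_\G \times \W_\G$ and closedness of $\mathcal{C}$, pass to a subsequence converging to some $(\mathfrak{a}, \mathfrak{b}) \in \mathcal{C}$. Set $N_0 \defeq N_{S, k}(\mathfrak{a}, \mathfrak{b}, \epsilon/4)$. By \ref{item:continuous}, $\wecSet{\alpha_n \times \beta_n}{S}{k} \to \wecSet{\alpha \times \beta}{S}{k}$ in Hausdorff distance, and separately $\wecSet{\alpha_n}{S}{N_0}$, $\wecSet{\beta_n}{S}{N_0}$ converge to the corresponding sets for $\alpha$, $\beta$. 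For $n$ large enough that all three of these Hausdorff distances are suitably small (say the first under $\epsilon/4$ and the others under $\epsilon / (16 N_0^4)$), the failure of the net condition at level $N_0$ for $(\mathfrak{a}_n, \mathfrak{b}_n)$ yields a vector $u \in \wecSet{\alpha_n \times \beta_n}{S}{k}$ at distance $\geq \epsilon$ from every $N_0$-step realization over $\alpha_n \times \beta_n$. Approximate $u$ by a point in $\wecSet{\alpha \times \beta}{S}{k}$, then by an $N_0$-step realization $\wecSetFun{\alpha \times \beta}{S}{k}{\phi \circ (g, h)}$, then transport $g$ and $h$ back to $\tilde g$, $\tilde h$ on $(X_n, \mu_n)$, $(Y_n, \nu_n)$ with close $N_0$-vectors, and use Propositions~\ref{prop:conv} and \ref{prop:conv_Lip} to produce an $N_0$-step realization over $\alpha_n \times \beta_n$ strictly within $\epsilon$ of $u$, contradicting the choice of $u$.

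The mechanics are not deep; the only real obstacle is the $\epsilon$-bookkeeping, especially making sure that all small quantities are chosen before $N_0$ is fixed so that the Lipschitz factor $N_0^4$ from Proposition~\ref{prop:conv_Lip} can be absorbed, and that the transported functions $\tilde g$, $\tilde h$ exist—this last point uses the fact that $\wecSet{\alpha_n}{S}{N_0}$ is the closure of $\wecSetFun{\alpha_n}{S}{N_0}{\Meas_{N_0}(X_n, \mu_n)}$, so any vector in it is arbitrarily approximable by one coming from a genuine $\Meas_{N_0}$\-/function.
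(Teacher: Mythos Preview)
Your proposal is correct and follows essentially the same approach as the paper: the implication \ref{item:bound_on_steps} $\Rightarrow$ \ref{item:continuous} is proved directly by transporting $N$-step functions via Propositions~\ref{prop:conv} and \ref{prop:conv_Lip}, and \ref{item:continuous} $\Rightarrow$ \ref{item:bound_on_steps} by contradiction through compactness of $\mathcal{C}$, fixing $N_0 = N_{S,k}(\mathfrak{a},\mathfrak{b},\epsilon')$ at the limit and transporting back. The only cosmetic differences are your choice of $\epsilon/4$ splits versus the paper's $\epsilon/3$ and $\epsilon/2$, and your explicit remark about passing from points of the closure $\wecSet{\alpha_n}{S}{N_0}$ to genuine realizations, which the paper leaves implicit.
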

	\begin{proof}[\textsc{Proof}]
		We start with the implication \ref{item:continuous} $\Longrightarrow$ \ref{item:bound_on_steps}. Suppose that \ref{item:continuous} holds and assume that for some $S \in \fins{\G}$, $k \in \N^+$, and $\epsilon > 0$, there is a sequence of pairs $(\mathfrak{a}_n, \mathfrak{b}_n) \in \mathcal{C}$ with $N_{S, k} (\mathfrak{a}_n, \mathfrak{b}_n, \epsilon) \longrightarrow \infty$. Since $\mathcal{C}$ is compact, we may pass to a subsequence so that $(\mathfrak{a}_n, \mathfrak{b}_n) \longrightarrow (\mathfrak{a}, \mathfrak{b}) \in \mathcal{C}$. By \ref{item:continuous}, we then also have $\mathfrak{a}_n \times \mathfrak{b}_n \longrightarrow \mathfrak{a} \times \mathfrak{b}$. Set $N \defeq N_{S, k} (\mathfrak{a}, \mathfrak{b}, \epsilon/3)$.
		
		Let $\alpha_n \colon \G \acts (X_n, \mu_n)$, $\beta_n \colon \G \acts (Y_n, \nu_n)$, $\alpha \colon \G \acts (X, \mu)$, and $\beta \colon \G \acts (Y, \nu)$ be representatives of the weak equivalence classes $\mathfrak{a}_n$, $\mathfrak{b}_n$, $\mathfrak{a}$, and $\mathfrak{b}$ respectively. We claim that $N_{S, k}(\alpha_n, \beta_n, \epsilon) \leq N$ for all sufficiently large $n \in \N$, contradicting the choice of $(\mathfrak{a}_n, \mathfrak{b}_n)$. Indeed, take any $u \in \wecSet {\alpha_n \times \beta_n} S k$. If $n$ is large enough, then there is $v \in \wecSet {\alpha \times \beta} S k$ such that \[\dist_\infty(u, v) \,<\, \epsilon/3.\] By the choice of $N$, there is a step function $f \in \Step_{k, N}(X, \mu; Y, \nu)$ such that
		\[
			\dist_\infty(v, \wecSetFun {\alpha \times \beta} S k f) \,<\, \epsilon/3.
		\]
		Let $g \in \Meas_N(X, \mu)$, $h \in \Meas_N(Y, \nu)$, and $\phi \colon N \times N \to k$ be such that $f = \phi \circ (g, h)$. If $n$ is large enough, then there exist maps $\tilde{g} \in \Meas_N(X_n, \mu_n)$ and $\tilde{h} \in \Meas_N(Y_n, \nu_n)$ satisfying
		\[
			 \dist_\infty(\wecSetFun {\alpha_n} S N {\tilde{g}}, \wecSetFun \alpha S N g) + \dist_\infty(\wecSetFun {\beta_n} S N {\tilde{h}}, \wecSetFun \beta S N h) \,<\, \epsilon N^{-4}/3.
		\]
		Let $\tilde{f} \defeq \phi \circ (\tilde{g}, \tilde{h})$. From Propositions~\ref{prop:conv} and \ref{prop:conv_Lip}, it follows that
		\[
			\dist_\infty(u, \wecSetFun {\alpha_n \times \beta_n} S k {\tilde{f}}) \,<\, \epsilon/3 + \epsilon/3 + N^4 \cdot (\epsilon N^{-4}/3) \,=\, \epsilon,
		\]
		as desired.
		
		Now we proceed to the implication \ref{item:bound_on_steps} $\Longrightarrow$ \ref{item:continuous}. Suppose that \ref{item:bound_on_steps} holds and let $(\mathfrak{a}_n, \mathfrak{b}_n)$, $(\mathfrak{a}, \mathfrak{b}) \in \mathcal{C}$ be such that $(\mathfrak{a}_n, \mathfrak{b}_n) \longrightarrow (\mathfrak{a}, \mathfrak{b})$. We have to show that $\mathfrak{a}_n \times \mathfrak{b}_n \longrightarrow \mathfrak{a} \times \mathfrak{b}$. Let $\alpha_n \colon \G \acts (X_n, \mu_n)$, $\beta_n \colon \G \acts (Y_n, \nu_n)$, $\alpha \colon \G \acts (X, \mu)$, and $\beta \colon \G \acts (Y, \nu)$ be representatives of the weak equivalence classes $\mathfrak{a}_n$, $\mathfrak{b}_n$, $\mathfrak{a}$, and $\mathfrak{b}$ respectively. We must argue that for any $S \in \fins{\G}$, $k \in \N^+$, and $\epsilon > 0$ and for all sufficiently large $n \in \N$,
		\begin{align}\label{eq:lower}
			&\wecSet {\alpha \times \beta} S k \,\subseteq\, \Ball_\epsilon(\wecSet {\alpha_n \times \beta_n} S k);\\
			&\wecSet {\alpha_n \times \beta_n} S k \,\subseteq\, \Ball_\epsilon(\wecSet {\alpha \times \beta} S k).\label{eq:upper}
		\end{align}
		
		To prove \eqref{eq:lower}, let $N \defeq N_{S, k} (\alpha, \beta, \epsilon/2)$ and consider any $u \in \wecSet {\alpha \times \beta} S k$. By the choice of $N$, there is a step function $f \in \Step_{k, N}(X, \mu; Y, \nu)$ such that
		\[
			\dist_\infty(u, \wecSetFun {\alpha \times \beta} S k f) \,<\, \epsilon/2.
		\]
		Let $g \in \Meas_N(X, \mu)$, $h \in \Meas_N(Y, \nu)$, and $\phi \colon N \times N \to k$ be such that $f = \phi \circ (g, h)$. If $n$ is large enough, then there exist maps $\tilde{g} \in \Meas_N(X_n, \mu_n)$ and $\tilde{h} \in \Meas_N(Y_n, \nu_n)$ satisfying
		\[
		\dist_\infty(\wecSetFun {\alpha_n} S N {\tilde{g}}, \wecSetFun \alpha S N g) + \dist_\infty(\wecSetFun {\beta_n} S N {\tilde{h}}, \wecSetFun \beta S N h) \,<\, \epsilon N^{-4}/2.
		\]
		Let $\tilde{f} \defeq \phi \circ (\tilde{g}, \tilde{h})$. From Propositions~\ref{prop:conv} and \ref{prop:conv_Lip}, it follows that
		\[
			\dist_\infty(u, \wecSetFun {\alpha_n \times \beta_n} S k {\tilde{f}}) \,<\, \epsilon/2 + N^4 \cdot (\epsilon N^{-4}/2) \,=\, \epsilon,
		\]
		i.e., $u \in \Ball_\epsilon(\wecSet {\alpha_n \times \beta_n} S k)$, as desired. Notice that this argument did not involve assumption \ref{item:bound_on_steps}.
		
		To prove \eqref{eq:upper}, we use \ref{item:bound_on_steps} and choose $N$ so that $N_{S, k}(\alpha_n, \beta_n, \epsilon) \leq N$ for all $n \in \N$. Consider any $u \in \wecSet {\alpha_n \times \beta_n} S k$. Then there is a step function $f \in \Step_{k, N}(X_n, \mu_n; Y_n, \nu_n)$ such that
		\[
		\dist_\infty(u, \wecSetFun {\alpha_n \times \beta_n} S k f) \,<\, \epsilon/2.
		\]
		Let $g \in \Meas_N(X_n, \mu_n)$, $h \in \Meas_N(Y_n, \nu_n)$, and $\phi \colon N \times N \to k$ be such that $f = \phi \circ (g, h)$. If $n$ is large enough, then there exist maps $\tilde{g} \in \Meas_N(X, \mu)$ and $\tilde{h} \in \Meas_N(Y, \nu)$ satisfying
		\[
		\dist_\infty(\wecSetFun {\alpha} S N {\tilde{g}}, \wecSetFun {\alpha_n} S N g) + \dist_\infty(\wecSetFun {\beta} S N {\tilde{h}}, \wecSetFun {\beta_n} S N h) \,<\, \epsilon N^{-4}/2.
		\]
		Let $\tilde{f} \defeq \phi \circ (\tilde{g}, \tilde{h})$. From Propositions~\ref{prop:conv} and \ref{prop:conv_Lip}, it follows that
		\[
		\dist_\infty(u, \wecSetFun {\alpha \times \beta} S k {\tilde{f}}) \,<\, \epsilon/2 + N^4 \cdot (\epsilon N^{-4}/2) \,=\, \epsilon,
		\]
		i.e., $u \in \Ball_\epsilon(\wecSet {\alpha \times \beta} S k)$, and we are done.
	\end{proof}
	
	\section{Proof of Theorem~\ref{theo:main}}\label{sec:proof}
	
	\subsection{Expansion in $\SL_d(\Z/n\Z)$}
	
	For $n \in \N^+$, we use $\pi_n$ to indicate reduction modulo $n$ in various contexts. That is, we slightly abuse notation and give the same name to the residue maps
	\[
		\pi_n \colon \Z\to \Z/n\Z, \qquad \pi_n \colon \SL_d(\Z) \to \SL_d(\Z/n\Z), \qquad  \text{etc}.
	\]
	
	Let $G$ be a nontrivial finite group. For $A$, $S \subseteq G$, the \emphd{boundary}\footnote{For our purposes it will be more convenient to consider the vertex rather than the edge boundary.} of $A$ with respect to $S$ is
	\[
		\partial (A, S) \defeq \set{a \in A \,:\, Sa \not \subseteq A}.
	\]
	The \emphd{Cheeger constant} $h(G, S)$ of $G$ with respect to $S$ is given by
	\[
		h(G, S) \defeq \min_A \frac{|\partial(A, S)|}{|A|},
	\]
	where the minimum is taken over all nonempty subsets $A \subseteq G$ of size at most $|G|/2$. Notice that we have $h(G, S) > 0$ if and only if $S$ generates $G$. Indeed, let $\langle S \rangle$ be the subgroup of $G$ generated by $S$. If $\langle S \rangle \neq G$, then $|\langle S \rangle| \leq |G|/2$, while $\partial(\langle S \rangle, S) = \0$, hence $h(G, S) = 0$. Conversely, if $h(G, S) = 0$, then there is a nonempty proper subset $A \varsubsetneq G$ closed under left multiplication by the elements of $S$. This means that $A$ is a union of right cosets of $\langle S \rangle$, and thus $\langle S \rangle \neq G$.
	
	\begin{theo}[{Bourgain--Varj\'u \cite[Theorem~1]{BV}}]\label{theo:BV}
		Let $d \geq 2$ and let $S \in \fins{\SL_d(\Z)}$ be a  finite symmetric subset such that the subgroup $\langle S \rangle$ of $\SL_d(\Z)$ generated by $S$ is Zariski dense in $\SL_d(\R)$. Then there exist $n_0 \in \N^+$ and $\epsilon > 0$ such that for all $n \geq 2$, if $\operatorname{gcd}(n, n_0) = 1$, then
		\[
			h(\SL_d(\Z/n\Z), \pi_n(S)) \geq \epsilon.
		\]
	\end{theo}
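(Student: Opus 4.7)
My plan is to translate the desired uniform Cheeger bound into a uniform spectral gap: by the discrete Cheeger inequality, it suffices to show that the second-largest eigenvalue $\lambda_1$ of the averaging operator associated to $\pi_n(S)$ on $\ell^2(\SL_d(\Z/n\Z))$ is bounded away from $1$, uniformly over $n$ with $\gcd(n, n_0) = 1$. This recasts the combinatorial task as an analytic one, and the proof then splits into three layers.

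First (generation), I would invoke strong approximation for Zariski dense subgroups of $\SL_d(\Z)$ (Matthews--Vaserstein--Weisfeiler, Nori, Weisfeiler) to obtain an $n_0$ such that $\pi_n(\langle S \rangle) = \SL_d(\Z/n\Z)$ whenever $\gcd(n, n_0) = 1$. This guarantees that $\pi_n(S)$ generates $\SL_d(\Z/n\Z)$, which is needed for $h > 0$, and reduces the problem to a \emph{quantitative} lower bound.

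Second (prime case via Bourgain--Gamburd), for $n = p$ prime I would run the three-step Bourgain--Gamburd machine on the convolution powers $\mu^{(t)}$ of the uniform probability measure on $\pi_p(S)$. The inputs are: (i) a logarithmic girth / non-concentration estimate at time $t \asymp \log p$, coming from Kesten-type exponential decay of return probabilities on the non-amenable subgroup $\langle S \rangle \leq \SL_d(\Z)$ (non-amenability follows from Zariski density via the Tits alternative) together with the injectivity of $\pi_p$ on a ball of radius $\asymp \log p$ (entries of a short word in $S$ are polynomially bounded, hence strictly smaller than $p$); (ii) escape from proper subgroups of $\SL_d(\F_p)$, controlled by effective dimension counting of the short words of $S$ lying in subvarieties of $\SL_d$; and (iii) the product theorem in $\SL_d(\F_p)$ (Helfgott in low rank; Pyber--Szab\'o and Breuillard--Green--Tao in general), which together with Balog--Szemer\'edi--Gowers yields an $\ell^2$-flattening lemma. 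Iterating flattening $O(\log p)$ times drives $\|\mu^{(t)}\|_2$ down to the equidistribution scale $|\SL_d(\F_p)|^{-1/2}$, equivalently a spectral gap uniform in primes $p$ coprime to $n_0$.

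Third (general $n$), I would write $n = \prod_i p_i^{k_i}$ and $\SL_d(\Z/n\Z) \cong \prod_i \SL_d(\Z/p_i^{k_i}\Z)$. Prime-power moduli $p^k$ are handled by lifting the prime-level gap through the nilpotent kernel $\SL_d(\Z/p^k\Z) \to \SL_d(\F_p)$ via a Fourier / commutator argument. The heart of \cite{BV}, and the main obstacle, is assembling these factors into a single spectral gap for $\SL_d(\Z/n\Z)$ that does not degrade as $n$ acquires many prime factors: a naive prime-by-prime assembly loses uniformity, since the individual gaps compound unfavorably. Overcoming this requires a uniform product / approximate-subgroup theorem for $\SL_d(\Z/n\Z)$ itself, tracking all residues $\pi_{p_i}$ simultaneously to rule out correlated concentration across distinct primes. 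That theorem is the genuinely new contribution of \cite{BV}, and essentially all of the technical effort in my plan would go into establishing it.
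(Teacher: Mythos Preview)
The paper does not prove this statement: Theorem~\ref{theo:BV} is quoted from Bourgain--Varj\'u \cite[Theorem~1]{BV} and used as a black box, with only the remark that it ``is an outcome of a long series of contributions by a number of researchers.'' There is therefore no proof in the paper to compare your proposal against.

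That said, your outline is a faithful high-level summary of the actual Bourgain--Varj\'u strategy: strong approximation to get generation, the Bourgain--Gamburd machine (girth, escape from subgroups, product theorem plus $\ell^2$-flattening) for the prime case, lifting through the $p$-congruence kernel for prime powers, and the genuinely hard step of obtaining uniformity over composite $n$ via an approximate-subgroup theorem in $\SL_d(\Z/n\Z)$. As a roadmap it is accurate; but each of these steps, especially the last, is a substantial paper in its own right, so what you have written is a plan rather than a proof. For the purposes of the present paper nothing more is expected, since the result is simply imported.
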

	
	Theorem~\ref{theo:BV} is an outcome of a long series of contributions by a number of researchers; for more background, see \cite{BV, Tao_book} and the references therein.
	
	A finite group $G$ is called \emphd{$D$-quasirandom}, where $D \geq 1$, if every nontrivial unitary representation of $G$ has dimension at least $D$ (a representation $\rho$ of $G$ is nontrivial if $\rho(a) \neq 1$ for some $a \in G$). This notion was introduced by Gowers \cite{Gowers}. For a map $\zeta \colon G \to \mathbb{C}$, we write
	\[
		\mathbf{E} \zeta \defeq \frac{1}{|G|}\sum_{x \in G} \zeta(x), \qquad \|\zeta\|_\infty \defeq \max_{x \in G} |\zeta(x)|, \qquad \text{and} \qquad \|\zeta\|_2 \defeq \sqrt{\sum_{x \in G} |\zeta(x)|^2}.
	\]
	Given $\zeta$, $\eta \colon G \to \mathbb{C}$, define the \emphd{convolution} $\zeta \ast \eta \colon G \to \mathbb{C}$ of $\zeta$ and $\eta$ by the formula
	\[
		(\zeta \ast \eta)(x) \defeq \sum_{ab \,=\, x} \zeta(a)\eta(b), %\sum \set{\zeta(a)\eta(b) \,:\, a,\,b \in G,\ ab=x}. %\sum_{y \in G} \zeta(y) \eta(y^{-1}x).
	\]
	where the sum is taken over all pairs of $a$, $b \in G$ such that $ab=x$.
	%Note that, by the Cauchy--Schwarz inequaality,
	%\[
	%	\|f\ast g\|_\infty \leq \|f\|_2 \|g\|_2.
	%\]
	
	\begin{theo}[{\cite[Proposition 1.3.7]{Tao_book}}]\label{theo:weak_mixing}
		Let $G$ be a finite group and let $\zeta$, $\eta \colon G \to \mathbb{C}$. Suppose that $G$ is $D$-quasirandom. If $\mathbf{E}\zeta = \mathbf{E}\eta = 0$, then
		\[
			\|\zeta \ast \eta\|_2 \leq \sqrt{\frac{|G|}{D}}\|\zeta\|_2 \|\eta\|_2.
		\]
	\end{theo}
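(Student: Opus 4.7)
The plan is to prove this standard mixing inequality using Fourier analysis on the finite group $G$. For each (equivalence class of) irreducible unitary representation $\rho \colon G \to U(V_\rho)$, define the Fourier coefficient $\widehat\zeta(\rho) \defeq \sum_{x \in G} \zeta(x)\rho(x) \in \mathrm{End}(V_\rho)$. The Peter--Weyl theorem / Plancherel identity then reads
\[
\|\zeta\|_2^2 \;=\; \frac{1}{|G|}\sum_\rho (\dim V_\rho)\,\|\widehat\zeta(\rho)\|_{HS}^2,
\]
where $\|\cdot\|_{HS}$ is the Hilbert--Schmidt norm on $\mathrm{End}(V_\rho)$, and an identical identity holds for $\eta$ and for $\zeta \ast \eta$. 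The strategy is to translate everything into this spectral language, where $D$-quasirandomness becomes a uniform lower bound on dimensions of the occurring representations.

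I would then record three elementary observations. First, convolution is diagonalized by the Fourier transform: $\widehat{\zeta \ast \eta}(\rho) = \widehat\zeta(\rho)\widehat\eta(\rho)$, which follows from the multiplicativity of $\rho$ and a direct computation. Second, the mean-zero hypothesis kills the trivial representation, since $\widehat\zeta(1) = |G|\cdot \mathbf{E}\zeta = 0$ and likewise $\widehat\eta(1) = 0$, so only nontrivial $\rho$ contribute to Plancherel for $\zeta \ast \eta$. Third, $D$-quasirandomness gives $\dim V_\rho \geq D$ for every nontrivial irreducible $\rho$; isolating a single term in Plancherel applied to $\zeta$ therefore yields the uniform bound
\[
\|\widehat\zeta(\rho)\|_{HS}^2 \;\leq\; \frac{|G|}{\dim V_\rho}\|\zeta\|_2^2 \;\leq\; \frac{|G|}{D}\|\zeta\|_2^2 \qquad \text{for every } \rho \neq 1.
\]

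Finally, I would combine these ingredients. Applying Plancherel to $\zeta \ast \eta$ together with the operator inequalities $\|AB\|_{HS} \leq \|A\|_{op}\|B\|_{HS}$ and $\|A\|_{op} \leq \|A\|_{HS}$ gives
\[
\|\zeta \ast \eta\|_2^2 \;=\; \frac{1}{|G|}\sum_{\rho \neq 1}(\dim V_\rho)\|\widehat\zeta(\rho)\widehat\eta(\rho)\|_{HS}^2 \;\leq\; \Bigl(\max_{\rho \neq 1}\|\widehat\zeta(\rho)\|_{HS}^2\Bigr)\cdot \frac{1}{|G|}\sum_{\rho \neq 1}(\dim V_\rho)\|\widehat\eta(\rho)\|_{HS}^2,
\]
and the two factors on the right are bounded by $(|G|/D)\|\zeta\|_2^2$ and $\|\eta\|_2^2$, respectively, by the third observation and Plancherel applied to $\eta$. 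Taking square roots yields the desired estimate. No individual step presents a genuine obstacle; the only thing requiring real care is keeping track of the normalization conventions in the Plancherel formula and consistently restricting all sums to nontrivial representations.
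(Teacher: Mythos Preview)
Your argument is correct and is essentially the standard proof of this inequality via nonabelian Fourier analysis (Plancherel on the finite group, convolution diagonalized by the Fourier transform, and the quasirandomness hypothesis bounding the Fourier mass at each nontrivial representation). Note, however, that the paper does not supply its own proof of this statement at all: it is quoted as \cite[Proposition~1.3.7]{Tao_book} and used as a black box. The proof in Tao's book proceeds along the same representation-theoretic lines you outline, so your proposal is both correct and aligned with the cited source.
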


	In order to apply Theorem~\ref{theo:weak_mixing}, we will need the following variation of Frobenius's lemma:
	
	\begin{prop}[{cf. \cite[Lemma 1.3.3]{Tao_book}}]\label{prop:Frobenius}
		Let $d$, $n \geq 2$ and let $p$ be the smallest prime divisor of $n$. Then the group $\SL_d(\Z/n\Z)$ is $(p-1)/2$-quasirandom.
	\end{prop}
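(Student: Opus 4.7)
My plan is to first use the Chinese Remainder Theorem to reduce Proposition~\ref{prop:Frobenius} to the case of a prime power $n = p^a$, and then to induct on $a$ via Clifford theory, with the cited classical Frobenius lemma \cite[Lemma~1.3.3]{Tao_book} as the base case.

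For the CRT reduction: if $n = \prod_q q^{a_q}$, then $\SL_d(\Z/n\Z) \cong \prod_q \SL_d(\Z/q^{a_q}\Z)$, and every nontrivial irreducible complex representation of a direct product of finite groups is an external tensor product of irreducibles of the factors, at least one of which is nontrivial. Its dimension is therefore bounded below by the quasirandomness constant of that factor, and since $(q-1)/2$ is minimized at the smallest prime divisor $p$ of $n$, it suffices to show that $\SL_d(\Z/p^a\Z)$ is $(p-1)/2$-quasirandom for every $a \geq 1$.

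For the induction on $a$: the base case $a = 1$ is \cite[Lemma~1.3.3]{Tao_book}. For $a \geq 2$, set $G \defeq \SL_d(\Z/p^a\Z)$ and $K \defeq \ker\bigl(G \to \SL_d(\Z/p^{a-1}\Z)\bigr)$. Using that $2(a-1) \geq a$, so that quadratic terms collapse modulo $p^a$, a direct computation shows that the map $M \mapsto I + p^{a-1}M$ identifies $K$ with the additive group of trace-zero matrices $\mathfrak{sl}_d(\F_p)$; in particular $K$ is abelian. Moreover, under this identification, the conjugation action of $G$ on $K$ factors through the adjoint action of $\SL_d(\F_p)$ on $\mathfrak{sl}_d(\F_p)$.

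Now, given a nontrivial irreducible representation $\rho$ of $G$, if $\rho|_K$ is trivial then $\rho$ descends to $\SL_d(\Z/p^{a-1}\Z)$ and the inductive hypothesis applies. Otherwise, Clifford's theorem expresses $\rho|_K$ as an isotypic sum over a single $G$-orbit of nontrivial characters of $K$, so $\dim \rho$ is at least the size of that orbit. Since $\widehat K \cong \mathfrak{sl}_d(\F_p)^*$ as $\SL_d(\F_p)$-modules, the problem reduces to bounding below the smallest nonzero $\SL_d(\F_p)$-orbit on $\mathfrak{sl}_d(\F_p)^*$. When $p \nmid d$, the trace form identifies $\mathfrak{sl}_d(\F_p)^*$ with $\mathfrak{sl}_d(\F_p)$, and a standard centralizer computation (already for the regular nilpotent element in $\mathfrak{sl}_2$, whose $\SL_2$-orbit has size $(p^2-1)/2$) gives orbits of size at least $(p^2-1)/2$, comfortably larger than $(p-1)/2$. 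I expect the case $p \mid d$ to be the main technical subtlety: then scalar matrices lie in $\mathfrak{sl}_d(\F_p)$, and one instead has $\mathfrak{sl}_d(\F_p)^* \cong M_d(\F_p)/\F_p I$, so nonzero characters correspond to non-scalar matrices modulo scalars. A projective centralizer argument loses at worst a factor of $p$ relative to the previous case, still delivering orbits of size at least $(p^2 - 1)/(2p) \geq (p-1)/2$, which suffices.
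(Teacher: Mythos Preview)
Your approach is sound and genuinely different from the paper's. After the shared CRT reduction, the paper does \emph{not} induct on the exponent: for $\SL_d(\Z/p^k\Z)$ it directly picks an elementary matrix $e \notin \ker\rho$ whose off-diagonal entry has maximal $p$-divisibility (this choice forces $\rho(e)^p = 1$), then observes that conjugation by $\mathrm{diag}(b, b^{-1}, 1, \ldots, 1)$ sends $e$ to $e^{b^2}$, yielding $(p-1)/2$ distinct eigenvalues of $\rho(e)$ --- a single uniform argument with no Clifford theory and no case split on whether $p \mid d$. Your Clifford-theoretic route is a legitimate alternative and in fact gives stronger bounds whenever $\rho|_K$ is nontrivial, since nonzero coadjoint orbits are much larger than $(p-1)/2$. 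Two caveats are worth noting. First, Tao's Lemma~1.3.3 as stated treats only $\mathrm{PSL}_2(\F_p)$, so your base case for general $d$ still requires the Frobenius argument --- which is essentially the paper's proof specialized to $k=1$. Second, the cleanest way to obtain your orbit lower bound (uniformly in both the $p\nmid d$ and $p\mid d$ cases, and more honestly than the ``lose a factor of $p$'' heuristic you sketch) is the very same diagonal-conjugation trick: any non-scalar matrix can be conjugated to have a nonzero off-diagonal entry, which $\mathrm{diag}(t,t^{-1},1,\ldots,1)$ then scales by $t^2$, producing $(p-1)/2$ distinct conjugates even modulo $\F_p I$. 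So the two proofs ultimately rest on the same core computation, packaged differently.
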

	\begin{proof}[\textsc{Proof}]
		The statement is trivial for $p = 2$, so assume that $p$ is odd. Write $n$ as a product of powers of distinct primes: $n = p_1^{k_1} \cdots p_r^{k_r}$. Then, by the Chinese remainder theorem,
		\[
			\SL_d(\Z/n\Z) \,\cong\, \SL_d(\Z/p_1^{k_1}\Z) \times \cdots \times \SL_d(\Z/p_r^{k_r}\Z).
		\]
		Since the product of $D$-quasirandom groups is again $D$-quasirandom \cite[Exercise 1.3.2]{Tao_book}, it is enough to consider the case when $r = 1$ and $n = p^k$.
		
		Let $\rho$ be a nontrivial finite-dimensional unitary representation of $\SL_d(\Z/p^k\Z)$. By \cite[Theorem~4.3.9]{H-OM}, the group $\SL_d(\Z/p^k\Z)$ is generated by the \emphd{elementary} matrices, i.e., those that differ from the identity matrix in precisely one off-diagonal entry. Thus, there exists an elementary matrix $e \in \SL_d(\Z/p^k\Z)$ such that $\rho(e) \neq 1$. Without loss of generality, we may assume that $e$ is of the form
		\[
			e = \left(\begin{array}{cccc}
				1 & a & \cdots & 0 \\
				0 & 1 & \cdots & 0 \\
				\vdots & \vdots & \ddots & \vdots \\
				0 & 0 & \cdots & 1
			\end{array}\right),
		\]
		where $0 \neq a \in \Z/p^k\Z$. Choose $e$ so as to maximize the power of $p$ that divides $a$. Let $\lambda$ be an arbitrary eigenvalue of $\rho(e)$ not equal to $1$ (such $\lambda$ exists since $\rho(e) \neq 1$ and is unitary). We have
		\[
			e^p = \left(\begin{array}{cccc}
			1 & pa & \cdots & 0 \\
			0 & 1 & \cdots & 0 \\
			\vdots & \vdots & \ddots & \vdots \\
			0 & 0 & \cdots & 1
			\end{array}\right),
		\]
		so, by the choice of $e$, $\rho(e)^p = \rho(e^p) = 1$. Hence, $\lambda^p = 1$, so the values $\lambda$, $\lambda^2$, \ldots, $\lambda^{p-1}$ are pairwise distinct. Let $b \in \N^+$ be an integer coprime to $p$ and let $c \defeq b^2$. Since $b$ is  invertible in $\Z/p^k\Z$, we can form a diagonal matrix $h \in \SL_d(\Z/p^k\Z)$ with entries $(b, b^{-1}, 1 \ldots, 1)$. Then $h^{-1}$ is the diagonal matrix with entries $(b^{-1}, b, 1, \ldots, 1)$, and we have
		\[
		heh^{-1} = \left(\begin{array}{cccc}
			1 & ca & \cdots & 0 \\
			0 & 1 & \cdots & 0 \\
			\vdots & \vdots & \ddots & \vdots \\
			0 & 0 & \cdots & 1
			\end{array}\right) = e^c.
		\]
		This shows that $e$ and $e^c$ are conjugate in $\SL_d(\Z/p^k\Z)$, and hence $\rho(e)$ and $\rho(e)^c$ are conjugate as well. Since $\lambda^c$ is an eigenvalue of $\rho(e)^c$, it must also be an eigenvalue of $\rho(e)$. It remains to notice that there exist $(p-1)/2$ choices for $c$ that are distinct modulo $p$ (corresponding to the $(p-1)/2$ nonzero quadratic residues modulo $p$), so $\rho(e)$ must have at least $(p-1)/2$ distinct eigenvalues, which is only possible if the dimension of $\rho$ is at least $(p-1)/2$.
	\end{proof}
	
	\subsection{The main lemma}
	
	For the rest of Section~\ref{sec:proof}, fix $d \geq 2$ and let $\G$ be a subgroup of $\SL_d(\Z)$ that is Zariski dense in $\SL_d(\R)$.
	
	For $n \geq 2$, define $G_n \defeq \SL_d(\Z/n\Z)$. Let $\alpha_n \colon \G \acts G_n$ be the action given by
	\[
		\gamma \cdot x \defeq \pi_n(\gamma)x \qquad \text{for all } \gamma \in \G \text{ and } x \in G_n.
	\]
	We view $\alpha_n$ as a \pmp action by equipping $G_n$ with the uniform probability measure (to simplify notation, we will avoid mentioning this measure explicitly).
	
	The group $\G$ has a Zariski dense finitely generated subgroup (by Tits's theorem~\cite[Theorem~3]{Tits}, such a subgroup can be chosen to be free of rank $2$), so fix an arbitrary finite symmetric set $S \in \fins{\G}$ such that the group $\langle S \rangle$ is Zariski dense in $\SL_d(\R)$. Fix $n_0 \in \N^+$ and $\epsilon >0$ provided by Theorem~\ref{theo:BV} applied to $S$ and let \[\delta \defeq \frac{\epsilon}{32|S|}.\]
	Define $\uhalf \in [0;1]^{S \times 2 \times 2}$ by setting, for all $\gamma \in S$ and $i$, $j <2$,
	\[
		\uhalf(\gamma, i, j) \defeq \begin{cases}
			1/2 &\text{if } i = j;\\
			0 &\text{if } i \neq j.
		\end{cases}
	\]
	The heart of the proof of Theorem~\ref{theo:main} lies in the following lemma:
	
	\begin{lemma}\label{lemma:finite}
		Let $n$, $m \geq 2$ be such that $n$ divides $m$ and $\operatorname{gcd}(m, n_0) = 1$. Let $p$ be the smallest prime divisor of $n$ and let
		\[
			N \defeq \left\lfloor \frac{1}{25}\sqrt{p-1}\right\rfloor.
		\]
		Assume that $N \geq 1$. Then $\uhalf \in \wecSet {\alpha_n \times \alpha_m} S 2$, yet for all $f \in \Step_{2, N} (G_n; G_m)$, we have
			\[
				\dist_\infty(\uhalf, \wecSetFun {\alpha_n \times \alpha_m} S 2 f) \geq \delta.
			\]
		In particular, $N_{S, 2}(\alpha_n, \alpha_m, \delta) > N$. 
	\end{lemma}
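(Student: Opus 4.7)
My strategy splits along the two assertions. For $\uhalf \in \wecSet{\alpha_n \times \alpha_m}{S}{2}$, I will exhibit an exactly $S$-invariant subset of $G_n \times G_m$ of density $1/2$. The map $\pi \colon G_n \times G_m \to G_n$, $\pi(x, y) \defeq \pi_{m,n}(y)^{-1}x$, is $\G$-invariant under $\alpha_n \times \alpha_m$: this is a direct calculation using $\pi_{m,n} \circ \pi_m = \pi_n$. Since $|\SL_d(\Z/n\Z)|$ is always even for $d, n \geq 2$ (a Chinese-remainder argument reducing to $p$-power levels), I can choose $B \subseteq G_n$ with $|B| = |G_n|/2$, and then $\wecSetFun{\alpha_n \times \alpha_m}{S}{2}{\mathbbm{1}_{\pi^{-1}(B)}} = \uhalf$ exactly.

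For the lower bound on $N$, I argue by contradiction: assume some $f = \phi_0 \circ (g, h) \in \Step_{2, N}(G_n; G_m)$ achieves $\dist_\infty(\uhalf, \wecSetFun{\alpha_n \times \alpha_m}{S}{2}{f}) < \delta$, and set $A \defeq f^{-1}(1)$. Then $|\mu(A) - 1/2| < \delta$ and $\sum_{\gamma \in S} \mu(A \symdif \gamma^{-1} A) < 2|S|\delta = \epsilon/16$. The $\G$-orbits on $G_n \times G_m$ are exactly the fibers $F_z \defeq \pi^{-1}(z)$, and via $y \mapsto (\pi_{m,n}(y)z, y)$ the action on $F_z$ identifies with left multiplication of $G_m$ via $\pi_m$. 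Since $\gcd(m, n_0) = 1$, Theorem~\ref{theo:BV} gives $h(G_m, \pi_m(S)) \geq \epsilon$, so applying Cheeger to $A \cap F_z$ and $A^c \cap F_z$ fiber-by-fiber and summing yields
\[
	\mu(A \symdif \pi^{-1}(B)) \,\leq\, \frac{1}{\epsilon}\sum_{\gamma \in S} \mu(A \symdif \gamma^{-1}A) \,<\, \frac{1}{16},
\]
where $B \defeq \{z \in G_n : |A \cap F_z| > |F_z|/2\}$; moreover $|\mu(A) - \beta| < 1/16$ for $\beta \defeq |B|/|G_n|$.

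The technical heart of the argument is a quasirandomness-based upper bound on $\mu(A \cap \pi^{-1}(B))$. Decompose $A = \bigsqcup_{(a, b) \in Y} P_a \times Q_b$ with $Y \defeq \phi_0^{-1}(1) \subseteq N \times N$, $P_a \defeq g^{-1}(a)$, $Q_b \defeq h^{-1}(b)$, and define $\phi_a(z) \defeq |P_a \cap zB|$ and $\mu_{Q_b}(z) \defeq |\pi_{m,n}^{-1}(z) \cap Q_b|$, with mean-zero parts $\hat\phi_a$ and $\sigma_{Q_b}$. A direct expansion gives
\[
	\mu(A \cap \pi^{-1}(B)) - \mu(A)\beta \,=\, \frac{1}{|G_n||G_m|}\sum_{(a, b) \in Y}\langle \sigma_{Q_b}, \hat\phi_a\rangle.
\]
The key observation is that $z \mapsto \hat\phi_a(z^{-1})$ equals the convolution $(\mathbbm{1}_B - \beta) \ast (\mathbbm{1}_{P_a^{-1}} - |P_a|/|G_n|)$ of two mean-zero indicators; combined with Proposition~\ref{prop:Frobenius} ($G_n$ is $D$-quasirandom with $D = (p-1)/2$) and Theorem~\ref{theo:weak_mixing}, this yields $\|\hat\phi_a\|_2 \leq |G_n|^{3/2}\sqrt{\beta(1-\beta)\pi_a(1-\pi_a)/D}$ where $\pi_a = |P_a|/|G_n|$, while a crude bound gives $\|\sigma_{Q_b}\|_2 \leq \sqrt{|G_m||Q_b|/|G_n|}$. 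Two Cauchy--Schwarz applications, first on each $\langle \sigma_{Q_b}, \hat\phi_a\rangle$ term and then on $\sum_{(a, b) \in Y}\sqrt{|P_a||Q_b|} \leq \sqrt{|Y||G_n||G_m|}$ using $|Y| \leq N^2$, then produce
\[
	|\mu(A \cap \pi^{-1}(B)) - \mu(A)\beta| \,\leq\, N/\sqrt{2(p-1)}.
\]

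To conclude, I combine this upper bound with the identity $\mu(A \cap \pi^{-1}(B)) = (\mu(A) + \beta - \mu(A \symdif \pi^{-1}(B)))/2$ and rearrange to obtain
\[
	\mu(A) + \beta - 2\mu(A)\beta \,\leq\, \mu(A \symdif \pi^{-1}(B)) + N\sqrt{2/(p-1)}.
\]
Since $\mu(A)$ and $\beta$ both lie within $1/16 + \delta < 3/32$ of $1/2$, the left side equals $1/2 - 2(\mu(A) - 1/2)(\beta - 1/2) > 1/2 - 3/512 > 0.49$. On the other hand, $\mu(A \symdif \pi^{-1}(B)) < 1/16$ and $N \leq \sqrt{p-1}/25$ bound the right side by $1/16 + \sqrt{2}/25 < 0.12$, a contradiction. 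The principal obstacle is the preceding paragraph: correctly identifying $\hat\phi_a$ with a convolution of centered indicators so that Theorem~\ref{theo:weak_mixing} applies, and pushing the resulting $L^2$ bound through two Cauchy--Schwarz steps to realize the sharp $N/\sqrt{p-1}$ scaling that exactly matches the lemma's threshold.
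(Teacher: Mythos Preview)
Your proof is correct and follows essentially the same three-step strategy as the paper: exhibit an invariant half-density set via the fiber map $\pi(x,y)=\pi_{m,n}(y)^{-1}x$; use the Cheeger bound from Theorem~\ref{theo:BV} fiberwise to show that any $f$ with $\theta$-vector close to $u$ must be $L^1$-close to an invariant function $f_Z$ (your $\pi^{-1}(B)$); and use quasirandomness of $G_n$ (Proposition~\ref{prop:Frobenius} with Theorem~\ref{theo:weak_mixing}) to show that $N$-step functions cannot be close to such invariant functions.

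The only noteworthy difference is in how the quasirandomness step is packaged. The paper isolates a stand-alone ``triple convolution'' estimate (Claim~\ref{claim:triple_convolution}) and uses it to prove, for \emph{every} $Z$ with $\min(|Z|,|G_n\setminus Z|)\geq |G_n|/4$, that $\dist(f,f_Z)\geq 1/8$ (Claim~\ref{claim:not_invariant}); the contradiction is then $1/16$ versus $1/8$. You instead work directly with the specific $B$ produced by the Cheeger step, bound $|\mu(A\cap\pi^{-1}(B))-\mu(A)\beta|$ by recognizing $\hat\phi_a$ as a convolution of centered indicators, and close via the identity $\mu(A\cap\pi^{-1}(B))=(\mu(A)+\beta-\mu(A\symdif\pi^{-1}(B)))/2$. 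This is a perfectly valid reorganization yielding the same $N/\sqrt{p-1}$ scaling; the paper's version has the minor advantage of cleanly separating the two claims, while yours avoids introducing the auxiliary $\circledast$ operation.
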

	\begin{proof}[\textsc{Proof}]
		Let $\proj_2 \colon G_n \times G_m \to G_m$ denote the projection on the second coordinate. %, i.e., set
		%\[
		%	\proj_2 (x, y) \defeq y \qquad \text{for all } x \in G_n \text{ and } y \in G_m.
		%\]
		Note that, by definition, the map $\proj_2$ is equivariant. Since $n$ divides $m$, there is a well-defined reduction modulo $n$ map $\pi_n \colon G_m \to G_n$, and it is surjective. For $z \in G_n$, define
		\[
			\mathcal{O}_z \defeq \set{(x, y) \in G_n \times G_m \,:\, x = \pi_n(y)z}.
		\]
		Evidently, the set $\mathcal{O}_z$ is $(\alpha_n \times \alpha_m)$-invariant. Furthermore, the map $\proj_2$ establishes an equivariant bijection between $\mathcal{O}_z$ and $G_m$. Since $\operatorname{gcd}(m,n_0) = 1$, Theorem~\ref{theo:BV} implies that the action $\alpha_m$ is transitive, and hence so is the restriction of the action $\alpha_n \times \alpha_m$ to $\mathcal{O}_z$. Therefore, the orbits of $\alpha_n \times \alpha_m$ are precisely the sets $\mathcal{O}_z$ for $z\in G_n$.
		
		Given a subset $Z \subseteq G_n$, define $f_Z \colon G_n \times G_m \to 2$ by
		\[
			f_Z(x, y) \defeq \begin{cases}
			0 &\text{if } \pi_n(y)^{-1}x \not\in Z;\\
			1 &\text{if } \pi_n(y)^{-1}x \in Z.
			\end{cases}
		\]
		The functions of the form $f_Z$ for $Z \subseteq G_n$ are precisely the $(\alpha_n \times \alpha_m)$-invariant maps $G_n \times G_m \to 2$.
		
		Now we can show that $\uhalf \in \wecSet {\alpha_n \times \alpha_m} S 2$. The group $G_n$ contains an element of order $2$, namely the diagonal matrix with entries $(-1,-1,1,\ldots, 1)$, so $|G_n|$ is even. Hence, for any set $Z \subset G_n$ of size exactly $|G_n|/2$, we have $\wecSetFun {\alpha_n \times \alpha_m} S 2 {f_Z} = \uhalf$, as desired.
		
		For $z \in G_n$ and $A \subseteq \mathcal{O}_z$, define the \emphd{boundary} of $A$ by
		\[
		\partial A \defeq \set{(x,y) \in A \,:\, S \cdot (x,y) \not \subseteq A}.
		\]
		Suppose that $|A| \leq |G_m|/2$ (note that $|G_m| = |\mathcal{O}_z|$). Then, since $\proj_2$ establishes an equivariant bijection between $\mathcal{O}_z$ and $G_m$, Theorem~\ref{theo:BV} yields
		\begin{equation}\label{eq:boundary_in_orbit}
		|\partial A| \geq \epsilon |A|.
		\end{equation}
		
		\begin{smallclaim}\label{claim:invariant}
			Let $f \colon G_n \times G_m \to 2$ be such that
			\begin{equation}\label{eq:almost_inv}
				\dist_\infty(\uhalf, \wecSetFun {\alpha_n \times \alpha_m} S 2 f) < \delta.
			\end{equation}
			Then there is a set $Z \subseteq G_n$ such that $\dist(f, f_Z) < 1/16$.
		\end{smallclaim}
		\begin{claimproof}
			For each $\gamma \in S$, let
			\[
				B_\gamma \defeq \set{(x,y) \in G_n \times G_m \,:\, f(x,y) \neq f(\gamma \cdot x, \gamma \cdot y)},
			\]
			and define $B \defeq \bigcup_{\gamma \in S} B_\gamma$. By \eqref{eq:almost_inv}, for any $\gamma \in S$, we have
			\[
				\frac{|B_\gamma|}{|G_n| |G_m|} \,=\, \wecSetFun {\alpha_n \times \alpha_m} S 2 f (\gamma, 0, 1) + \wecSetFun {\alpha_n \times \alpha_m} S 2 f (\gamma, 1, 0) \,<\, 2\delta,
			\]
			and therefore \[|B| \,<\, 2\delta |S| |G_n| |G_m| \,=\, \frac{\epsilon}{16}|G_n||G_m|.\] We will show that the set
			\[
				Z \defeq \set{z \in G_n \,:\, f(x, y) = 1 \text{ for at least } |G_m|/2 \text{ pairs } (x,y) \in \mathcal{O}_z}
			\]
			is as desired. Define
			\[
				A \defeq \set{(x, y) \in G_n \times G_m \,:\, f(x,y) \neq f_Z(x,y)},
			\]
			so $\dist (f, f_Z)  = |A|/(|G_n||G_m|)$. Take any $z \in G_n$. By the definition of $Z$, we have $|A \cap \mathcal{O}_z| \leq |G_m|/2$, and hence, by \eqref{eq:boundary_in_orbit},
			\[
				|\partial(A \cap \mathcal{O}_z)| \geq \epsilon |A \cap \mathcal{O}_z|.
			\]
			Note that $\partial (A \cap \mathcal{O}_z) \subseteq B \cap \mathcal{O}_z$, so we have
			\[
				|B \cap \mathcal{O}_z| \geq |\partial (A \cap \mathcal{O}_z)| \geq \epsilon|A \cap \mathcal{O}_z|.
			\]
			Hence,
			\[
				|A| \,=\, \sum_{z \in G_n} |A \cap \mathcal{O}_z| \,\leq\, \sum_{z \in G_n} \epsilon^{-1} |B \cap \mathcal{O}_z| \,=\, \epsilon^{-1} |B| \,<\, \frac{1}{16}|G_n||G_m|.
			\]
			In other words, $\dist(f, f_Z) < 1/16$, as claimed.
		\end{claimproof}
		
		For $\zeta \colon G_n \to \mathbb{C}$ and $\xi \colon G_m \to \mathbb{C}$, define $\zeta \circledast \xi \colon G_n \to \mathbb{C}$ by the formula
		\[
			(\zeta \circledast \xi)(x) \defeq \sum_{a\pi_n(b) \,=\, x} \zeta(a) \xi(b), % \set{\zeta(a) \xi(b) \,:\, a \in G_n, \ b \in G_m, \ a\pi_n(b) = x}.
		\]
		where the sum is taken over all pairs of $a \in G_n$ and $b \in G_m$ such that $a\pi_n(b)=x$. We will need the following corollary of Theorem~\ref{theo:weak_mixing} and Proposition~\ref{prop:Frobenius}:
		
		\begin{smallclaim}\label{claim:triple_convolution}
			Let $\zeta$, $\eta \colon G_n \to \mathbb{C}$ and $\xi \colon G_m \to \mathbb{C}$. Then
			\[
				\|(\zeta \ast \eta) \circledast \xi \,-\, (\mathbf{E}\zeta) (\mathbf{E}\eta) (\mathbf{E} \xi) |G_n||G_m|\|_\infty \,\leq\, \sqrt{\frac{2|G_m|}{p-1}} \|\zeta\|_2 \|\eta\|_2 \|\xi\|_2.
			\]
		\end{smallclaim}
		\begin{claimproof}
			This is a variant of \cite[Exercise 1.3.12]{Tao_book}. After subtracting its expectation from each function, we may assume that $\mathbf{E}\zeta = \mathbf{E}\eta = \mathbf{E}\xi = 0$.%A straightforward computation shows that
			%\[
			%	(\zeta_0 \ast \eta_0) \circledast \xi_0 \,=\, (\zeta \ast \eta) \circledast \xi \,-\, (\mathbf{E}\zeta) (\mathbf{E}\eta) (\mathbf{E} \xi) |G_n||G_m|.
			%\]
			By the Cauchy--Schwarz inequality, we have
			\[
				\|(\zeta \ast \eta) \circledast \xi \|_\infty 
				\,\leq\, \sqrt{\frac{|G_m|}{|G_n|}}\|\zeta \ast \eta\|_2 \|\xi\|_2,
			\]
			while Theorem \ref{theo:weak_mixing} and Proposition~\ref{prop:Frobenius} yield
			\[
			\|\zeta \ast \eta\|_2
			\,\leq\, \sqrt{\frac{2|G_n|}{p-1}} \|\zeta\|_2 \|\eta\|_2. \qedhere
			\]
			%It remains to observe that \iffalse $\|\zeta_0\|^2_2 = \|\zeta\|^2_2 - |\mathbf{E} \zeta|^2 |G_n|$, so $\|\zeta_0\|_2 \leq \|\zeta\|_2$, and, similarly,\fi $\|\zeta_0\|_2 \leq \|\zeta\|_2$, $\|\eta_0\|_2 \leq \|\eta\|_2$, and $\|\xi_0\|_2 \leq \|\xi\|_2$.
		\end{claimproof}
		
		We use Claim~\ref{claim:triple_convolution} to prove that invariant maps are hard to approximate by step functions:
		
		\begin{smallclaim}\label{claim:not_invariant}
			Let $f \in \Step_{2, N} (G_n; G_m)$ and $Z \subseteq G_n$. Suppose that
			$\min \set{|Z|, |G_n| - |Z|} \geq |G_n|/4$.
			Then $\dist (f, f_Z) \geq 1/8$. 
		\end{smallclaim}
		\begin{claimproof}
			Let $g \colon G_n \to N$, $h \colon G_m \to N$, and $\phi \colon N \times N \to 2$ be such that $f = \phi \circ (g,h)$. For $i < N$, set \[X_i \defeq g^{-1}(i).\] Thus, $\set{X_i \,:\, i < N}$ is a partition of $G_n$ into $N$ pieces. Given $i < N$ and $j < 2$, let
			\[
				Y_{i,j} \,\defeq\, \set{y \in G_m \,:\, \phi(i, h(y)) = j} \,=\, \set{y \in G_m \,:\, f(x, y) = j \text{ for all } x \in X_i}.
			\]
			Note that $Y_{i,0} \cup Y_{i,1} = G_m$. Define
			\[
				A \defeq \set{(x, y) \in G_n \times G_m \,:\, f(x,y) \neq f_Z(x,y)},
			\]
			so $\dist(f, f_Z) = |A|/(|G_n||G_m|)$. Let $\mathbf{1}_{G_n}$ be the identity element of $G_n$, and for each set $F \subseteq G_n$, let $\mathbbm{1}_F \colon G_n \to 2$ denote the indicator function of $F$. Then, for any $i < N$, we have
			\begin{align*}
				|(X_i \times Y_{i,0}) \cap A| \,&=\, |\set{(x, y) \in X_i \times Y_{i,0} \,:\, f_Z(x,y) = 1}| \\
				&=\, |\set{(z, x, y) \in Z \times X_i \times Y_{i,0} \,:\, z x^{-1} \pi_n(y) = \mathbf{1}_{G_n} }|\\
				&=\, ((\mathbbm{1}_Z \ast \mathbbm{1}_{X^{-1}_i}) \circledast \mathbbm{1}_{Y_{i,0}})(\mathbf{1}_{G_n}).
			\end{align*}
			By Claim~\ref{claim:triple_convolution}, the last expression is at least 
			\[
				\frac{|Z||X_i||Y_{i,0}|}{|G_n|} \,-\, \sqrt{\frac{2|G_m||Z||X_i||Y_{i,0}|}{p-1}} \,\geq\, \frac{|X_i||Y_{i,0}|}{4} \,-\, \sqrt{\frac{2}{p-1}} |G_n||G_m|.
			\]
			Similarly, we have
			\[
				|(X_i \times Y_{i,1}) \cap A| \,\geq\, \frac{|X_i||Y_{i,1}|}{4} \,-\, \sqrt{\frac{2}{p-1}} |G_n||G_m|,
			\]
			and hence
			\[
				|(X_i \times G_m) \cap A| \,\geq\, \frac{|X_i||G_m|}{4} \,-\, \sqrt{\frac{8}{p-1}} |G_n||G_m|.
			\]
			Therefore,
			\[
				|A| \,=\, \sum_{i < N} |(X_i \times G_m) \cap A|
				\,\geq\, \left(\frac{1}{4} - N\sqrt{\frac{8}{p-1}}\right)|G_n||G_m| \,>\, \frac{1}{8}|G_n||G_m|,
			\]
			and thus $\dist(f, f_Z) > 1/8$, as desired.
		\end{claimproof}
		
		It remains to combine Claims~\ref{claim:invariant} and \ref{claim:not_invariant}. Suppose that $f \in \Step_{2,N}(G_n;G_m)$ satisfies
		\[
		\dist_\infty(\uhalf, \wecSetFun {\alpha_n \times \alpha_m} S 2 f) < \delta.
		\]
		By Claim~\ref{claim:invariant}, there is a set $Z \subseteq G_n$ such that $\dist(f, f_Z) <1/16$. By Proposition~\ref{prop:Lipschitz}, we have
		\[
			\dist_\infty(\uhalf, \wecSetFun {\alpha_n \times \alpha_m} S 2 {f_Z}) \,<\, \delta + 2 \cdot (1/16) \,<\,1/4.
		\]
		In particular, for any $\gamma \in S$,
		\[
			\frac{|Z|}{|G_n|} \,=\, \wecSetFun {\alpha_n \times \alpha_m} S 2 {f_Z} (\gamma, 1, 1) \,>\, \uhalf(\gamma, 1, 1) - 1/4 \,=\, 1/4,
		\]
		i.e., $|Z| \geq |G_n|/4$, and, similarly, $|G_n| - |Z| \geq |G_n|/4$. Therefore, by Claim~\ref{claim:not_invariant}, $\dist(f, f_Z) \geq 1/8$, which is a contradiction. The proof of Lemma~\ref{lemma:finite} is complete.
	\end{proof}
	
	\subsection{Finishing the proof}
	
	We say that $\mathcal{N} \subseteq \N^+$ is a \emphd{directed set} if $\mathcal{N}$ is infinite and for any two elements $n_1$, $n_2 \in \mathcal{N}$, there is some $m \in \mathcal{N}$ divisible by both $n_1$ and $n_2$. Each directed set $\mathcal{N} \subseteq \N^+$ gives rise to an inverse system consisting of the groups $(G_n)_{n \in \mathcal{N}}$ together with the homomorphisms $\pi_n \colon G_m \to G_n$ for every pair of $n$, $m \in \mathcal{N}$ such that $n$ divides $m$. The inverse limit of this system is an infinite profinite group, which we denote by $G_\mathcal{N}$. For example, if we let
	\[
		\mathcal{N}(p) \defeq \set{p, p^2, p^3, \ldots}
	\]
	for some prime $p$, then $G_{\mathcal{N}(p)}\cong\SL_d(\Z_p)$, where $\Z_p$ is the ring of $p$-adic integers.
	
	If $\mathcal{N} \subseteq \N^+$ is a directed set, then $\SL_d(\Z)$ naturally embeds into $G_\mathcal{N}$, so we can identify $\G$ with a subgroup of $G_\mathcal{N}$. This allows us to consider the left multiplication action $\alpha_\mathcal{N} \colon \G \acts G_\mathcal{N}$. As the group $G_\mathcal{N}$ is compact, we can equip $G_\mathcal{N}$ with the Haar probability measure and view $\alpha_\mathcal{N}$ as a \pmp action. Clearly, the action $\alpha_\mathcal{N}$ is free. Note that for each $n \in \mathcal{N}$, there is a well-defined reduction modulo $n$ map $\pi_n \colon G_\mathcal{N} \to G_n$, which is equivariant and pushes the Haar measure on $G_\mathcal{N}$ forward to the uniform probability measure on $G_n$. In particular, $\alpha_n$ is a factor of $\alpha_\mathcal{N}$, and hence $\alpha_n \preceq \alpha_\mathcal{N}$.
	
	The following is a direct consequence of Lemma~\ref{lemma:finite}:
	
	\begin{lemma}\label{lemma:infinite}
		Let $\mathcal{N}$, $\mathcal{M} \subseteq \N^+$ be directed sets such that $\mathcal{N} \subseteq \mathcal{M}$ and $\operatorname{gcd}(m, n_0) = 1$ for all $m \in \mathcal{M}$. Let $p$ be the smallest prime number that divides an element of $\mathcal{N}$ and let
		\[
		N \defeq \left\lfloor \frac{1}{25}\sqrt{p-1}\right\rfloor.
		\]
		Assume that $N \geq 1$. Then $\uhalf \in \wecSet {\alpha_\mathcal{N} \times \alpha_\mathcal{M}} S 2$, yet for all $f \in \Step_{2, N} (G_\mathcal{N}; G_\mathcal{M})$, we have
		\[
		\dist_\infty(\uhalf, \wecSetFun {\alpha_\mathcal{N} \times \alpha_\mathcal{M}} S 2 f) \geq \delta.
		\]
		In particular, $N_{S, 2}(\alpha_\mathcal{N}, \alpha_\mathcal{M}, \delta) > N$. 
	\end{lemma}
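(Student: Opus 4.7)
The plan is to reduce Lemma~\ref{lemma:infinite} to Lemma~\ref{lemma:finite} by approximating objects on the profinite groups $G_\mathcal{N}$ and $G_\mathcal{M}$ with their finite-quotient counterparts. I will repeatedly use that for each $n \in \mathcal{N}$ (resp.\ $m \in \mathcal{M}$), the reduction map $\pi_n \colon G_\mathcal{N} \to G_n$ (resp.\ $\pi_m \colon G_\mathcal{M} \to G_m$) is an equivariant measure-preserving surjection, so $\alpha_n$ is a factor of $\alpha_\mathcal{N}$ and $\alpha_m$ is a factor of $\alpha_\mathcal{M}$. For the first assertion, I would choose some $n \in \mathcal{N}$ with $p \mid n$ (such $n$ exists by the definition of $p$, and its smallest prime divisor is then exactly $p$) and some $m \in \mathcal{M}$ with $n \mid m$ (available because $\mathcal{N} \subseteq \mathcal{M}$ and $\mathcal{M}$ is directed). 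Lemma~\ref{lemma:finite} applied to this pair gives $\uhalf \in \wecSet {\alpha_n \times \alpha_m} S 2$, and two applications of Corollary~\ref{corl:mult} yield $\alpha_n \times \alpha_m \preceq \alpha_\mathcal{N} \times \alpha_\mathcal{M}$, whence $\uhalf \in \wecSet {\alpha_\mathcal{N} \times \alpha_\mathcal{M}} S 2$.

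For the second assertion, I would argue by contradiction: suppose some $f \in \Step_{2,N}(G_\mathcal{N}; G_\mathcal{M})$ satisfies $\dist_\infty(\uhalf, \wecSetFun {\alpha_\mathcal{N} \times \alpha_\mathcal{M}} S 2 f) < \delta$, and write $f = \phi \circ (g, h)$ with $g \in \Meas_N(G_\mathcal{N})$, $h \in \Meas_N(G_\mathcal{M})$, and $\phi \colon N \times N \to 2$. The Borel $\sigma$-algebra on $G_\mathcal{N}$ is generated by cylinder sets $\pi_n^{-1}(B)$ with $n \in \mathcal{N}$ and $B \subseteq G_n$, so by standard measure theory, for any $\epsilon' > 0$ there exist $n \in \mathcal{N}$ and $g' \in \Meas_N(G_n)$ such that the pullback $g' \circ \pi_n$ is within $\epsilon'$ of $g$ in the $\dist$-pseudometric on $\Meas_N(G_\mathcal{N})$; analogously I obtain $m \in \mathcal{M}$ and $h' \in \Meas_N(G_m)$ for $h$. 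Using directedness, I may further enlarge $n$ and $m$ (replacing $g'$ and $h'$ by their further pullbacks, which preserves the approximations) so that $p \mid n$ and $n \mid m$.

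Now set $f' \defeq \phi \circ (g', h') \in \Step_{2,N}(G_n; G_m)$. Since $\pi_n$ is equivariant and measure-preserving, $\wecSetFun {\alpha_n} S N {g'} = \wecSetFun {\alpha_\mathcal{N}} S N {g' \circ \pi_n}$, and similarly for $h'$; hence Proposition~\ref{prop:conv} gives $\wecSetFun {\alpha_n \times \alpha_m} S 2 {f'} = \wecSetFun {\alpha_\mathcal{N} \times \alpha_\mathcal{M}} S 2 {\phi \circ (g' \circ \pi_n, h' \circ \pi_m)}$. Proposition~\ref{prop:Lipschitz} then compares this with $\wecSetFun {\alpha_\mathcal{N} \times \alpha_\mathcal{M}} S 2 f$ at a cost of $O(\epsilon')$, and the triangle inequality yields $\dist_\infty(\uhalf, \wecSetFun {\alpha_n \times \alpha_m} S 2 {f'}) < \delta$ once $\epsilon'$ is sufficiently small, contradicting Lemma~\ref{lemma:finite} applied to $n, m$. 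The main (modest) technical point is arranging the simultaneous cylinder approximation on both factors with the compatible divisibility $p \mid n \mid m$; the $N$-step structure is preserved automatically because $g$ and $h$ are approximated individually and then recomposed with the original $\phi$.
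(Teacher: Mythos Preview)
Your proposal is correct and follows essentially the same route as the paper: reduce to Lemma~\ref{lemma:finite} via the factor maps $\pi_n$, $\pi_m$ and a cylinder-set approximation of $g$ and $h$, using directedness to arrange the divisibility $n \mid m$. The only cosmetic differences are that the paper uses $\alpha_n \times \alpha_n$ rather than $\alpha_n \times \alpha_m$ for the first assertion, and invokes Propositions~\ref{prop:conv} and~\ref{prop:conv_Lip} (incurring an $N^4$ factor) instead of your direct appeal to Proposition~\ref{prop:Lipschitz}; your route there is in fact slightly cleaner. Note also that the extra condition $p \mid n$ you arrange is harmless but unnecessary: any $n \in \mathcal{N}$ has smallest prime divisor $\geq p$, so Lemma~\ref{lemma:finite} already covers $\Step_{2,N}$.
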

	\begin{proof}[\textsc{Proof}]
		To prove that $\uhalf \in \wecSet {\alpha_\mathcal{N} \times \alpha_\mathcal{M}} S 2$, take any $n \in \mathcal{N}$, $n \geq 2$. Since $\alpha_n \preceq \alpha_\mathcal{N}$, $\alpha_\mathcal{M}$, it follows from Corollary~\ref{corl:mult} that $\alpha_n \times \alpha_n \preceq \alpha_\mathcal{N} \times \alpha_\mathcal{M}$, and, by Lemma~\ref{lemma:finite}, we obtain \[\uhalf \in \wecSet {\alpha_n \times \alpha_n} S 2 \subseteq \wecSet {\alpha_\mathcal{N} \times \alpha_\mathcal{M}} S 2.\]
		Now suppose that some $f \in \Step_{2, N} (G_\mathcal{N}; G_\mathcal{M})$ satisfies
		\begin{equation}\label{eq:too_close}
		\dist_\infty(\uhalf, \wecSetFun {\alpha_\mathcal{N} \times \alpha_\mathcal{M}} S 2 f) < \delta.
		\end{equation}
		Let $g \in \Meas_N(G_\mathcal{N})$, $h \in \Meas_N(G_\mathcal{M})$, and $\phi \colon N \times N \to 2$ be such that $f = \phi \circ (g, h)$. After modifying the maps $g$ and $h$ on sets of arbitrarily small measure, we can arrange that there exist integers $n \in \mathcal{N}$ and $m \in \mathcal{M}$ and functions $\tilde{g} \colon G_n \to N$ and $\tilde{h} \colon G_m \to N$ such that
		\[
			g = \tilde{g} \circ \pi_n \qquad \text{and} \qquad h = \tilde{h} \circ \pi_m.
		\] 
		Using Propositions~\ref{prop:conv} and \ref{prop:conv_Lip}, we can ensure that inequality \eqref{eq:too_close} is still valid after this modification. We may furthermore assume that $n \geq 2$ and $n$ divides $m$ (the last part uses that $\mathcal{N} \subseteq \mathcal{M}$ and $\mathcal{M}$ is a directed set). Let $\tilde{f} \defeq \phi \circ (\tilde{g}, \tilde{h})$. Then $\tilde{f} \in \Step_{2,N}(G_n;G_m)$ and \[\wecSetFun {\alpha_n \times \alpha_m} S 2 {\tilde{f}} = \wecSetFun {\alpha_\mathcal{N} \times \alpha_\mathcal{M}} S 2 f.\]
		But the existence of such $\tilde{f}$ contradicts Lemma~\ref{lemma:finite}.
	\end{proof}
	
	Now we can complete the proof of Theorem~\ref{theo:main}:
	
	\begin{proof}[\textsc{Proof of Theorem~\ref{theo:main}}]
		Recall that $\G$ is a Zariski dense subgroup of $\SL_d(\Z)$ with $d \geq 2$; $S$ is a finite symmetric subset of $\G$ such that the group $\langle S \rangle$ is still Zariski dense; $n_0 \in \N^+$ and $\epsilon > 0$ are given by Theorem~\ref{theo:BV} applied to $S$; and $\delta = \epsilon/(32|S|)$.
		
		\ref{item:squaring} By Lemma~\ref{lemma:infinite}, we have
		\[
			\lim_{p\text{ prime}}N_{S, 2}(\alpha_{\mathcal{N}(p)}, \alpha_{\mathcal{N}(p)}, \delta) \,=\, \infty.
		\]
		The desired conclusion follows by applying Theorem~\ref{theo:criterion} to the set $\mathcal{C}\defeq \set{(\mathfrak{a}, \mathfrak{a}) \,:\, \mathfrak{a} \in \WFree_\G}$.
		
		\ref{item:fiber} Let $\mathcal{M} \defeq \set{m \in \N^+ \,:\, \operatorname{gcd}(m, n_0) = 1}$. Then $\mathcal{M}$ is a directed set, and we claim that $\mathfrak{b} \defeq \wec{\alpha_\mathcal{M}}$ is as desired. Indeed, by Lemma~\ref{lemma:infinite}, we have
		\[
			\lim_{p\text{ prime}}N_{S, 2}(\alpha_{\mathcal{N}(p)}, \alpha_{\mathcal{M}}, \delta) \,=\, \infty,
		\]
		so it remains to apply Theorem~\ref{theo:criterion} to the set $\mathcal{C}\defeq \set{(\mathfrak{a}, \mathfrak{b}) \,:\, \mathfrak{a} \in \WFree_\G}$.
	\end{proof}

	%{\renewcommand{\markboth}[2]{}% Remove header adjustment
		\printbibliography%}

\end{document}